\date{\today}
\newtheorem{theorem}{Theorem}%[section]
\newtheorem{proposition}[theorem]{Proposition}
\newtheorem{corollary}[theorem]{Corollary}
\newtheorem{lemma}[theorem]{Lemma}
\theoremstyle{definition}
\newtheorem{example}[theorem]{Example}%[section]
\newtheorem{remark}[theorem]{Remark}%[section]
\newtheorem{definition}[theorem]{Definition}%[section]
\begin{document}

\title[A note on feebly compact semitopological symmetric inverse semigroups of ...]{A note on feebly compact semitopological symmetric inverse semigroups of a bounded finite rank}

\author{Oleg~Gutik}
\address{Faculty of Mathematics, National University of Lviv,
Universytetska 1, Lviv, 79000, Ukraine}
\email{oleg.gutik@lnu.edu.ua}

\keywords{Semigroup, inverse semigroup, semitopological semigroup, compact, sequentially pracompact, totally countably pracompact, $\omega$-bounded-pracompact, feebly $\omega$-bounded, feebly compact, $\Delta$-system, the Sunflower Lemma, product, $\Sigma$-product.}

\subjclass[2020]{Primary 22A15, 54D45, 54H10; Secondary 54A10, 54D30, 54D40.}

\begin{abstract}
We study feebly compact shift-continuous $T_1$-topologies on the symmetric inverse semigroup $\mathscr{I}_\lambda^n$ of finite transformations of the rank $\leqslant n$. It is proved that such $T_1$-topology is sequentially pracompact if and only if it is feebly compact. Also, we show that every shift-continuous feebly $\omega$-bounded $T_1$-topology on $\mathscr{I}_\lambda^n$ is compact.
\end{abstract}

\maketitle

\section{Introduction and preliminaries}

We follow the terminology of the monographs~\cite{Carruth-Hildebrant-Koch-1983-1986, Clifford-Preston-1961-1967, Engelking-1989, Lawson-1998, Petrich-1984, Ruppert-1984}. If $X$ is a topological space and $A\subseteq X$, then by $\operatorname{cl}_X(A)$ and $\operatorname{int}_X(A)$ we denote the topological closure and interior of $A$ in $X$, respectively. By $|A|$ we denote the cardinality of a set $A$, by $A\triangle B$ the symmetric difference of sets $A$ and $B$, by $\mathbb{N}$ the set of positive integers, and by $\omega$ the first infinite cardinal.  By $\mathfrak{D}(\omega)$ and $\mathbb{R}$ we denote an infinite countable discrete space and the real numbers with the usual topology, respectively.

A semigroup $S$ is called \emph{inverse} if every $a$ in $S$ possesses an unique inverse $a^{-1}$, i.e. if there exists an unique element $a^{-1}$ in $S$ such that
\begin{equation*}
    aa^{-1}a=a \qquad \mbox{and} \qquad a^{-1}aa^{-1}=a^{-1}.
\end{equation*}
A map which associates to any element of an inverse semigroup its inverse is called the \emph{inversion}.

If $S$ is a~semigroup, then by $E(S)$ we denote the subset of all idempotents of $S$. On  the set of idempotents $E(S)$ there exists a natural partial order: $e\leqslant f$ \emph{if and only if} $ef=fe=e$. A \emph{semilattice} is a commutative semigroup of idempotents. We observe that the set of idempotents of an inverse semigroup is a semilattice~\cite{Wagner-1952}.

Every inverse semigroup $S$ admits a partial order:
\begin{equation*}
  a\preccurlyeq b \qquad \hbox{if and only if there exists} \qquad e\in E(S) \quad \hbox{such that} \quad a=eb.
\end{equation*}
We shall say that $\preccurlyeq$ is the \emph{natural partial order} on $S$ (see \cite{Carruth-Hildebrant-Koch-1983-1986, Wagner-1952}).

Let $\lambda$ be an arbitrary nonzero cardinal. A map $\alpha$ from a subset $D$ of $\lambda$ into $\lambda$ is called a \emph{partial transformation} of $\lambda$. In this case the set $D$ is called the \emph{domain} of $\alpha$ and is denoted by $\operatorname{dom}\alpha$. The image of an element $x\in\operatorname{dom}\alpha$ under $\alpha$ is denoted by $x\alpha$.  Also, the set $\{ x\in \lambda\colon y\alpha=x \mbox{ for some } y\in Y\}$ is called the \emph{range} of $\alpha$ and is denoted by $\operatorname{ran}\alpha$. For convenience we denote by $\varnothing$ the empty transformation, a partial mapping with $\operatorname{dom}\varnothing=\operatorname{ran}\varnothing=\varnothing$.

Let $\mathscr{I}_\lambda$ denote the set of all partial one-to-one transformations of $\lambda$ together with the following semigroup operation:
\begin{equation*}
    x(\alpha\beta)=(x\alpha)\beta \quad \mbox{if} \quad
    x\in\operatorname{dom}(\alpha\beta)=\{
    y\in\operatorname{dom}\alpha\colon
    y\alpha\in\operatorname{dom}\beta\}, \qquad \mbox{for} \quad
    \alpha,\beta\in\mathscr{I}_\lambda.
\end{equation*}
The semigroup $\mathscr{I}_\lambda$ is called the \emph{symmetric
inverse semigroup} over the cardinal $\lambda$~(see \cite{Clifford-Preston-1961-1967}). For any $\alpha\in\mathscr{I}_\lambda$ the cardinality of $\operatorname{dom}\alpha$ is called the \emph{rank} of $\alpha$ and it is denoted by $\operatorname{rank}\alpha$. The symmetric inverse semigroup was introduced by V.~V.~Wagner~\cite{Wagner-1952}
and it plays a major role in the theory of semigroups.

%For every $\alpha\in \mathscr{I}_\lambda$ we put $\operatorname{rank}\alpha=\left|\operatorname{dom}\alpha\right|$.

Put
$\mathscr{I}_\lambda^n=\{ \alpha\in\mathscr{I}_\lambda\colon
\operatorname{rank}\alpha\leqslant n\}$,
for $n=1,2,3,\ldots$. Obviously,
$\mathscr{I}_\lambda^n$ ($n=1,2,3,\ldots$) are inverse semigroups,
$\mathscr{I}_\lambda^n$ is an ideal of $\mathscr{I}_\lambda$, for each $n=1,2,3,\ldots$. The semigroup
$\mathscr{I}_\lambda^n$ is called the \emph{symmetric inverse semigroup of
finite transformations of the rank $\leqslant n$} \cite{Gutik-Reiter-2009}. By
\begin{equation*}
\left({%
\begin{array}{cccc}
  x_1 & x_2 & \cdots & x_n \\
  y_1 & y_2 & \cdots & y_n \\
\end{array}%
}\right)
\end{equation*}
we denote a partial one-to-one transformation which maps $x_1$ onto $y_1$, $x_2$ onto $y_2$, $\ldots$, and $x_n$ onto $y_n$. Obviously, in such case we have $x_i\neq x_j$ and $y_i\neq y_j$ for $i\neq j$ ($i,j=1,2,3,\ldots,n$). The empty partial map $\varnothing\colon \lambda\rightharpoonup\lambda$ is denoted by $\boldsymbol{0}$. It is obvious that $\boldsymbol{0}$ is zero of the semigroup $\mathscr{I}_\lambda^n$.

Let $\lambda$ be a nonzero cardinal. On the set
 $
 B_{\lambda}=(\lambda\times\lambda)\cup\{ 0\}
 $,
where $0\notin\lambda\times\lambda$, we define the semigroup
operation ``$\, \cdot\, $'' as follows
\begin{equation*}
(a, b)\cdot(c, d)=
\left\{
  \begin{array}{cl}
    (a, d), & \hbox{ if~ } b=c;\\
    0, & \hbox{ if~ } b\neq c,
  \end{array}
\right.
\end{equation*}
and $(a, b)\cdot 0=0\cdot(a, b)=0\cdot 0=0$ for $a,b,c,d\in
\lambda$. The semigroup $B_{\lambda}$ is called the
\emph{semigroup of $\lambda\times\lambda$-matrix units}~(see
\cite{Clifford-Preston-1961-1967}). Obviously, for any cardinal $\lambda>0$, the semigroup
of $\lambda\times\lambda$-matrix units $B_{\lambda}$ is isomorphic
to $\mathscr{I}_\lambda^1$.

A subset $A$ of a topological space $X$ is called \emph{regular open} if $\operatorname{int}_X(\operatorname{cl}_X(A))=A$.

We recall that a topological space $X$ is said to be
\begin{itemize}
  \item \emph{semiregular} if $X$ has a base consisting of regular open subsets;
  \item \emph{compact} if each open cover of $X$ has a finite subcover;
  \item \emph{sequentially compact} if each sequence $\{x_i\}_{i\in\mathbb{N}}$ of $X$ has a convergent subsequence in $X$;
  \item \emph{countably compact} if each open countable cover of $X$ has a finite subcover;
  \item \emph{H-closed} if $X$ is a closed subspace of every Hausdorff topological space in which it is contained;
  \item \emph{$\omega$-bounded-pracompact} if $X$ contains a dense subset $D$ such that each countable subset of $D$ has the compact closure in $X$ \cite{Gutik-Ravsky-20??};
  \item \emph{infra H-closed} provided that any continuous image of $X$ into any first countable Hausdorff space is closed (see \cite{Hajek-Todd-1975});
  \item \emph{totally countably pracompact} if there exists a dense subset $D$ of the space $X$ such that each sequence of points of the set $D$ has a subsequence with the compact closure in $X$ \cite{Gutik-Ravsky-20??};
  \item \emph{sequentially pracompact} if there exists a dense subset $D$ of the space $X$ such that each sequence of points of the set $D$ has a convergent subsequence \cite{Gutik-Ravsky-20??};
  \item \emph{countably compact at a subset} $A\subseteq X$ if every infinite subset $B\subseteq A$  has  an  accumulation  point $x$ in $X$ \cite{Arkhangelskii-1992};
  \item \emph{countably pracompact} if there exists a dense subset $A$ in $X$  such that $X$ is countably compact at $A$ \cite{Arkhangelskii-1992};
  \item \emph{feebly $\omega$-bounded} if for each sequence $\{U_n\}_{n\in\mathbb{N}}$
  of nonempty  open  subsets of $X$ there is a compact subset $K$ of   $X$ such that $K\cap U_n\ne\varnothing$ for each $n$ \cite{Gutik-Ravsky-20??};
  \item \emph{selectively sequentially feebly compact} if for every family $\{U_n\colon n\in \mathbb{N}\}$ of nonempty open subsets of $X$, one can choose a point $x_n\in U_n$ for every $n\in \mathbb{N}$ in such a way that the sequence $\{x_n\colon n\in \mathbb{N}\}$ has a convergent subsequence (\cite{Dorantes-Aldama-Shakhmatov-2017});
  \item \emph{sequentially feebly compact} if for every family $\{U_n\colon n\in \mathbb{N}\}$ of nonempty open subsets of $X$, there exists an infinite set $J\subseteq \mathbb{N}$ and a point $x\in X$ such that the set $\{n\in J\colon W\cap U_n=\varnothing\}$ is finite for every open neighborhood $W$ of $x$ (see \cite{Dow-Porter-Stephenson-Woods-2004});
  \item \emph{selectively feebly compact} for each sequence $\{U_n\colon n\in \mathbb{N}\}$ of nonempty open subsets of $X$, one can choose a point $x\in X$ and a point $x_n\in U_n$ for each $n\in \mathbb{N}$ such that the set $\{n\in \mathbb{N}\colon x_n\in W\}$ is infinite for every open neighborhood $W$ of $x$ (\cite{Dorantes-Aldama-Shakhmatov-2017});
  \item \emph{feebly compact} (or \emph{lightly compact}) if each locally finite open cover of $X$ is finite~\cite{Bagley-Connell-McKnight-Jr-1958};
  \item $d$-\emph{feebly compact} (or \emph{\textsf{DFCC}}) if every discrete family of open subsets in $X$ is finite (see \cite{Matveev-1998});
  \item \emph{pseudocompact} if $X$ is Tychonoff and each continuous real-valued function on $X$ is bounded;
  \item $Y$-\emph{compact} for some topological space $Y$, if $f(X)$ is compact, for any continuous map $f\colon X\to Y$.
\end{itemize}

According to Theorem~3.10.22 of \cite{Engelking-1989}, a Tychonoff topological space $X$ is feebly compact if and only if $X$ is pseudocompact. Also, a Hausdorff topological space $X$ is feebly compact if and only if every locally finite family of nonempty open subsets of $X$ is finite.  Every compact space and every sequentially compact space are countably compact, every countably compact space is countably pracompact, every countably pracompact space is feebly compact (see \cite{Arkhangelskii-1992}), every H-closed space is feebly compact too (see \cite{Gutik-Ravsky-2015a}). Also, every space feebly compact is infra H-closed by Proposition 2 and Theorem 3 of \cite{Hajek-Todd-1975}. Using results of other authors we get that the following diagram which describes relations between the above defined classes of topological spaces.

\begin{equation*}
\xymatrix{
*+[F]{\begin{array}{c}
                                                          \hbox{\textbf{\small{sequentially}}}\\
                                                          \hbox{\textbf{\small{compact}}}
                                                        \end{array}}\ar@/_1pc/[dd]\ar@/^1pc/[rd] &
*+[F]{\begin{array}{c}
                                                          \hbox{\textbf{\small{compact}}}
                                                        \end{array}
}\ar[d]\ar@/^2pc/[dr] & \\
 & *+[F]{\begin{array}{c}
                                                          \hbox{\textbf{\small{countably}}}\\
                                                          \hbox{\textbf{\small{compact}}}
                                                        \end{array}}\ar[d]
                                                        \ar@/^4pc/[lu]|-{\begin{array}{c}
                                                          \small{T_3\hbox{-space}}\\
                                                          \small{\hbox{+scattered}}
                                                        \end{array}} \ar@/^1pc/[lu]|-{\hbox{\small{sequential}}} &
 *+[F]{\begin{array}{c}
                                                          \hbox{\textbf{\small{$\omega$-bounded-}}}\\
                                                          \hbox{\textbf{\small{pracompact}}}
                                                        \end{array}}\ar[d]\ar@/^2pc/[rddd]\\
*+[F]{\begin{array}{c}
                                                          \hbox{\textbf{\small{sequentially}}}\\
                                                          \hbox{\textbf{\small{pracompact}}}
                                                        \end{array}}\ar[d]\ar[r]
 &*+[F]{\begin{array}{c}
                                                          \hbox{\textbf{\small{countably}}}\\
                                                          \hbox{\textbf{\small{pracompact}}}
                                                        \end{array}}\ar@/_5.5pc/[dd] &
*+[F]{\begin{array}{c}
                                                          \hbox{\textbf{\small{totally}}}\\
                                                          \hbox{\textbf{\small{countably}}}\\
                                                          \hbox{\textbf{\small{pracompact}}}
                                                        \end{array}}\ar[l]           &
*+[F]{\hbox{\textbf{\small{H-closed}}}}\ar\ar@/_-4.2pc/[lldd]\ar@/_8pc/[uull]|-{\small{\hbox{regular}}}                                              \\
*+[F]{\begin{array}{c}
                                                          \hbox{\textbf{\small{selectively}}}\\
                                                          \hbox{\textbf{\small{sequentially}}}\\
                                                          \hbox{\textbf{\small{feebly compact}}}
                                                        \end{array}}\ar[r]\ar[d]&
*+[F]{\begin{array}{c}
                                                          \hbox{\textbf{\small{sequentially}}}\\
                                                          \hbox{\textbf{\small{feebly compact}}}
                                                        \end{array}} & *+[F]{\hbox{\textbf{\small{$d$-feebly compact}}}} \ar@/_-1.9pc/[ld]|-{\small{\hbox{quasi-regular}}} &
                                                        \\
*+[F]{\begin{array}{c}
                                                          \hbox{\textbf{\small{selectively}}}\\
                                                          \hbox{\textbf{\small{feebly compact}}}
                                                        \end{array}}\ar[r]                                                          &
*+[F]{\hbox{\textbf{\small{feebly compact}}}}\ar@/_6.3pc/[uuu]|-{\small{\hbox{normal}}} \ar@/_-.2pc/[u]|-{\hbox{\footnotesize{Fr\'{e}chet-Urysohn}}}\ar@/_.2pc/[ur]\ar[d] \ar@/_1pc/[rrd]|-{\footnotesize{\hbox{Tychonoff}}}  & &
                                                        *+[F]{\hbox{\textbf{\small{feebly $\omega$-bounded}}}}\ar@/_-2.1pc/[lll]\\
&*+[F]{\hbox{\textbf{\small{infra H-closed}}}}\ar@/_.1pc/[rr]|-{\footnotesize{\hbox{Tychonoff}}}\ar[d]& & *+[F]{\hbox{\textbf{\small{pseudocompact}}}}\ar@/_.2pc/[ull]\\
&*+[F]{\hbox{\textbf{\small{$\mathbb{R}$-compact}}}}\ar[d] &&\\
&*+[F]{\hbox{\textbf{\small{$\mathfrak{D}(\omega)$-compact}}}} &&\\
&&&
}
\end{equation*}

A {\it topological} ({\it semitopological}) {\it semigroup} is a topological space together with a continuous (separately continuous) semigroup operation. If $S$ is a~semigroup and $\tau$ is a topology on $S$ such that  $(S,\tau)$ is a semitopological semigroup, then we shall call $\tau$ a \emph{shift-continuous} \emph{topology} on~$S$. An inverse topological semigroup with the continuous inversion is called a \emph{topological inverse semigroup}.

Topological properties of an infinite (semi)topological semigroup $\lambda\times \lambda$-matrix units were studied in \cite{Gutik-Pavlyk-2005a, Gutik-Pavlyk-Reiter-2009}. In \cite{Gutik-Pavlyk-2005a} it was shown that on the infinite semitopological semigroup of  $\lambda\times \lambda$-matrix units $B_\lambda$ there exists a unique compact shift-continuous Hausdorff topology $\tau_c$ and also it is shown that every pseudocompact Hausdorff shift-continuous topology $\tau$ on $B_\lambda$ is compact. Also, in \cite{Gutik-Pavlyk-2005a} it is proved that every nonzero element of a Hausdorff semitopological semigroup of $\lambda\times \lambda$-matrix units $B_\lambda$ is an isolated point in the topological space $B_\lambda$. In \cite{Gutik-Pavlyk-2005a} it is shown that the infinite semigroup of $\lambda\times \lambda$-matrix units $B_\lambda$ cannot be embedded into a compact Hausdorff topological semigroup, every Hausdorff topological inverse semigroup $S$ that contains $B_\lambda$ as a subsemigroup, contains $B_\lambda$ as a closed subsemigroup, i.e., $B_\lambda$ is \emph{algebraically complete} in the class of Hausdorff topological inverse semigroups. This result in \cite{Gutik-Lawson-Repov-2009} is extended onto so called inverse semigroups with \emph{tight ideal series} and, as a corollary, onto the semigroup $\mathscr{I}_\lambda^n$. Also, in \cite{Gutik-Reiter-2009} it was proved that for every positive integer $n$ the semigroup $\mathscr{I}_\lambda^n$ is \emph{algebraically $h$-complete} in the class of Hausdorff topological inverse semigroups, i.e., every homomorphic image of $\mathscr{I}_\lambda^n$ is algebraically complete in the class of Hausdorff topological inverse semigroups. In the paper \cite{Gutik-Reiter-2010} this result is extended onto the class of Hausdorff semitopological inverse semigroups and it is shown therein that for an infinite cardinal $\lambda$ the semigroup $\mathscr{I}_\lambda^n$ admits a unique Hausdorff topology $\tau_c$ such that $(\mathscr{I}_\lambda^n,\tau_c)$ is a compact semitopological semigroup. Also, it was proved in \cite{Gutik-Reiter-2010} that every countably compact Hausdorff shift-continuous topology $\tau$ on $B_\lambda$ is compact. In \cite{Gutik-Pavlyk-Reiter-2009} it was shown that a topological semigroup of finite partial bijections $\mathscr{I}_\lambda^n$ with a compact subsemigroup of idempotents is absolutely H-closed (i.e., every homomorphic image of $\mathscr{I}_\lambda^n$ is algebraically complete in the class of Hausdorff topological semigroups) and any Hausdorff 
countably compact topological semigroup does not contain $\mathscr{I}_\lambda^n$ as a subsemigroup for an arbitrary infinite cardinal $\lambda$ and any positive integer $n$. In \cite{Gutik-Pavlyk-Reiter-2009} there were given sufficient conditions onto a topological semigroup $\mathscr{I}_\lambda^1$ to be non-H-closed. Also in \cite{Gutik-2014} it is proved that an infinite semitopological semigroup of $\lambda\times\lambda$-matrix units $B_\lambda$ is H-closed in the class of semitopological semigroups if and only if the space $B_\lambda$ is compact.
In the paper \cite{Gutik-2017} we studied feebly compact shift-continuous $T_1$-topologies on the semigroup $\mathscr{I}_\lambda^n$. For any positive integer $n\geqslant2$ and any infinite cardinal $\lambda$ a Hausdorff countably pracompact non-compact shift-continuous topology on $\mathscr{I}_\lambda^n$ is constructed there. In \cite{Gutik-2017} it is shown that for an arbitrary positive integer $n$ and an arbitrary infinite cardinal $\lambda$ for a shift--continuous $T_1$-topology $\tau$ on $\mathscr{I}_\lambda^n$ the following conditions are equivalent: $(i)$ $\tau$ is countably pracompact; $(ii)$ $\tau$ is feebly compact; $(iii)$ $\tau$ is $d$-feebly compact; $(iv)$ $\left(\mathscr{I}_\lambda^n,\tau\right)$ is H-closed; $(v)$ $\left(\mathscr{I}_\lambda^n,\tau\right)$ is $\mathfrak{D}(\omega)$-compact;
$(vi)$ $\left(\mathscr{I}_\lambda^n,\tau\right)$ is $\mathbb{R}$-compact; $(vii)$ $\left(\mathscr{I}_\lambda^n,\tau\right)$ is  infra H-closed. Also in \cite{Gutik-2017} we proved that for an arbitrary positive integer $n$ and an arbitrary infinite cardinal $\lambda$  every shift-continuous semiregular feebly compact $T_1$-topology $\tau$ on $\mathscr{I}_\lambda^n$ is compact. Similar results were obtained for a semitopological semilattice $(\mathrm{exp}_n{\lambda},\cap)$ in \cite{Gutik-Sobol-2016, Gutik-Sobol-2016a, Gutik-Sobol-2018}. Also, in \cite{Gutik-Sobol-2021??, Lysetska-2020} it is proved that feeble compactness implies compactness for semitopological bicyclic extensions.

In this paper we study feebly compact shift-continuous $T_1$-topologies on the symmetric inverse semigroup $\mathscr{I}_\lambda^n$ of finite transformations of the rank $\leqslant n$. It is proved that such $T_1$-topology is sequentially pracompact if and only if it is feebly compact. Also, we show that every shift-continuous feebly $\omega$-bounded $T_1$-topology on $\mathscr{I}_\lambda^n$ is compact. The results of this paper is announced  in \cite{Gutik-2018}.

%%%%%%%%%%%%%%%%%%%%%%%%%%%%%%%%%%%%%%%%%%%%%%%%%%%%%%%%%%%%

\section{On feebly compact shift continuous topologies on the semigroup $\mathscr{I}_\lambda^n$}\label{section-2}

%\medskip

Later we shall assume that $n$ is an arbitrary positive integer.

For every element $\alpha$ of the semigroup $\mathscr{I}_\lambda^n$ we put
\begin{equation*}
  {\uparrow}_l\alpha=\left\{\beta\in\mathscr{I}_\lambda^n\colon \alpha\alpha^{-1}\beta=\alpha\right\} \qquad \hbox{and} \qquad
  {\uparrow}_r\alpha=\left\{\beta\in\mathscr{I}_\lambda^n\colon\beta\alpha^{-1}\alpha=\alpha\right\}.
\end{equation*}
Then  Proposition~5 of \cite{Gutik-Reiter-2010} implies that ${\uparrow}_l\alpha={\uparrow}_r\alpha$ and by Lemma~6 of \cite[Section~1.4]{Lawson-1998} we have that $\alpha\preccurlyeq\beta$ if and only if $\beta\in{\uparrow}_l\alpha$ for $\alpha,\beta\in\mathscr{I}_\lambda^n$. Hence we put ${\uparrow}_{\preccurlyeq}\alpha={\uparrow}_l\alpha={\uparrow}_r\alpha$ for any $\alpha\in\mathscr{I}_\lambda^n$.

\begin{remark}\label{remark-1}
Later we identify  every element $\alpha$ of the semigroup $\mathscr{I}_\lambda^n$ with the graph $\textsf{graph}(\alpha)$ of the partial map $\alpha\colon \lambda\rightharpoonup \lambda$ (see \cite{Lawson-1998}). Then according to this identification we have that  $\alpha\preccurlyeq\beta$ if and only if $\alpha\subseteq\beta$.
\end{remark}

\begin{lemma}\label{lemma-2}
Let $n$ be an arbitrary positive integer and $\lambda$ be any infinite cardinal. Let $\alpha$ be any nonzero element of the semigroup $\mathscr{I}_\lambda^n$ with $\operatorname{rank}\alpha=m\leqslant n$. Then the poset $({\uparrow}_{\preccurlyeq}\alpha,\preccurlyeq)$  is order isomorphic to the poset $(\mathscr{I}_\lambda^{n-m},\preccurlyeq)$.
\end{lemma}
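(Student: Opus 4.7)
The plan is to exploit the graph identification from Remark~\ref{remark-1}, under which $\beta\in{\uparrow}_{\preccurlyeq}\alpha$ amounts to saying that the graph of $\beta$ contains the graph of $\alpha$ together with $\operatorname{rank}\beta\leqslant n$. Because the graph of $\alpha$ is already fixed, such a $\beta$ is determined by its ``new part'' $\beta\setminus\alpha$ (set-theoretic difference of graphs), which is automatically a partial one-to-one map with $\operatorname{dom}(\beta\setminus\alpha)\subseteq\lambda\setminus\operatorname{dom}\alpha$, $\operatorname{ran}(\beta\setminus\alpha)\subseteq\lambda\setminus\operatorname{ran}\alpha$, and $\operatorname{rank}(\beta\setminus\alpha)=\operatorname{rank}\beta-m\leqslant n-m$. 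Conversely, any such partial one-to-one map $\gamma$ can be adjoined to $\alpha$ to produce an element $\alpha\cup\gamma\in{\uparrow}_{\preccurlyeq}\alpha$, so the assignment $\beta\mapsto\beta\setminus\alpha$ is a bijection between ${\uparrow}_{\preccurlyeq}\alpha$ and the collection of all such restricted partial injections.

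Next I would use that $\lambda$ is infinite and $m$ is finite, so $|\lambda\setminus\operatorname{dom}\alpha|=|\lambda\setminus\operatorname{ran}\alpha|=\lambda$. Fix bijections $f\colon\lambda\to\lambda\setminus\operatorname{dom}\alpha$ and $g\colon\lambda\to\lambda\setminus\operatorname{ran}\alpha$, and define
\begin{equation*}
\varphi\colon {\uparrow}_{\preccurlyeq}\alpha \to \mathscr{I}_\lambda^{n-m},\qquad \beta\mapsto \bigl\{(f^{-1}(x),\,g^{-1}(y))\colon (x,y)\in \beta\setminus\alpha\bigr\}.
\end{equation*}
Its inverse sends $\gamma\in\mathscr{I}_\lambda^{n-m}$ to $\alpha\cup\{(f(x),g(y))\colon (x,y)\in\gamma\}$, which by the preceding paragraph lies in ${\uparrow}_{\preccurlyeq}\alpha$. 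Thus $\varphi$ is a bijection.

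Finally, I would verify that $\varphi$ is an order isomorphism. By Remark~\ref{remark-1}, on both posets the natural partial order coincides with graph inclusion. For $\beta_1,\beta_2\in{\uparrow}_{\preccurlyeq}\alpha$ we have $\beta_1\preccurlyeq\beta_2$ iff $\beta_1\subseteq\beta_2$, and since $\alpha\subseteq\beta_1$ and $\alpha\subseteq\beta_2$, this is equivalent to $\beta_1\setminus\alpha\subseteq\beta_2\setminus\alpha$; the latter inclusion is preserved and reflected by the set-theoretic bijection of $\lambda\times\lambda$ induced by $(f^{-1},g^{-1})$, so it is equivalent to $\varphi(\beta_1)\subseteq\varphi(\beta_2)$, i.e.\ $\varphi(\beta_1)\preccurlyeq\varphi(\beta_2)$.

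I do not expect any real obstacle: the argument is purely combinatorial bookkeeping once the graph viewpoint from Remark~\ref{remark-1} is adopted, and the essential ingredient is the cardinal equality $|\lambda\setminus\operatorname{dom}\alpha|=|\lambda\setminus\operatorname{ran}\alpha|=\lambda$, which holds precisely because $\lambda$ is infinite and $\operatorname{rank}\alpha$ is finite.
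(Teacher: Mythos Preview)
Your proof is correct and follows essentially the same approach as the paper's own proof: both use the graph identification from Remark~\ref{remark-1}, strip off the fixed part $\alpha$ from each $\beta\in{\uparrow}_{\preccurlyeq}\alpha$, and transport the remaining partial injection through bijections between $\lambda\setminus\operatorname{dom}\alpha$, $\lambda\setminus\operatorname{ran}\alpha$ and $\lambda$ (which exist because $\lambda$ is infinite and $m$ is finite). Your write-up is in fact more explicit than the paper's, which defines the map in coordinates and then simply asserts that ``simple verifications show'' it is an order isomorphism; you supply those verifications (the inverse map and the preservation and reflection of inclusion), and your argument also handles the edge case $m=n$ uniformly rather than singling it out.
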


\begin{proof}
Suppose that
\begin{equation*}
  \alpha=
\left(
  \begin{array}{ccc}
    x_1 & \cdots & x_m \\
    y_1 & \cdots & y_m \\
  \end{array}
\right)
\end{equation*}
for some $x_1,\ldots,x_m,y_1,\ldots,y_m\in\lambda$. If $m=n$ then the inequality $\alpha\preccurlyeq\beta$ in $(\mathscr{I}_\lambda^{n},\preccurlyeq)$ implies $\alpha=\beta$, and hence later we assume that $m<n$. Then for any $\beta\in\mathscr{I}_\lambda^{n}$ such that $\alpha\preccurlyeq\beta$ by Remark~\ref{remark-1} we have that
\begin{equation*}
  \beta=
\left(
  \begin{array}{cccccc}
    x_1 & \cdots & x_m & x_{m+1} & \cdots & x_n\\
    y_1 & \cdots & y_m & y_{m+1} & \cdots & y_n\\
  \end{array}
\right)
\end{equation*}
for some $x_{m+1},\ldots,x_n,y_{m+1},\ldots,y_n\in\lambda$. Since $\lambda$ is infinite, $|\lambda|=|\lambda\setminus\{x_1,\ldots,x_m\}|=|\lambda\setminus\{y_1,\ldots,y_m\}|$, and hence there exist bijective maps $\mathfrak{u}\colon \lambda\setminus\{x_1,\ldots,x_m\}\to \lambda$ and $\mathfrak{v}\colon \lambda\setminus\{y_1,\ldots,y_m\}\to\lambda$. Simple verifications show that the map $\mathfrak{I}\colon ({\uparrow}_{\preccurlyeq}\alpha,\preccurlyeq)\to (\mathscr{I}_\lambda^{n-m},\preccurlyeq)$ defined in the following way $\alpha\mapsto\boldsymbol{0}$ and
\begin{equation*}
\left(
  \begin{array}{cccccc}
    x_1 & \cdots & x_m & x_{m+1} & \cdots & x_n\\
    y_1 & \cdots & y_m & y_{m+1} & \cdots & y_n\\
  \end{array}
\right)
\mapsto
\left(
  \begin{array}{cccccc}
    (x_{m+1})\mathfrak{u} & \cdots & (x_n)\mathfrak{u}\\
    (y_{m+1})\mathfrak{v} & \cdots & (y_n)\mathfrak{v}\\
  \end{array}
\right)
\end{equation*}
is an order isomorphism.
\end{proof}

Later we need the following technical lemma from \cite{Gutik-2017}.

\begin{lemma}[{\cite[Lemma~3]{Gutik-2017}}]\label{lemma-2.10}
Let $n$ be an arbitrary positive integer and $\lambda$ be an arbitrary infinite cardinal. Let $\tau$ be a feebly compact shift-continuous $T_1$-topology on the semigroup $\mathscr{I}_\lambda^n$. Then for every $\alpha\in\mathscr{I}_\lambda^n$ and any open neighbourhood $U(\alpha)$ of $\alpha$ in $\left(\mathscr{I}_\lambda^n,\tau\right)$ there exist finitely many $\alpha_1,\ldots,\alpha_k\in{\uparrow}_{\preccurlyeq}\alpha\setminus\{\alpha\}$ such that
\begin{equation*}
  \mathscr{I}_\lambda^n\setminus\mathscr{I}_\lambda^{n-1}\cap {\uparrow}_{\preccurlyeq}\alpha \subseteq U(\alpha)\cup{\uparrow}_{\preccurlyeq}\alpha_1\cup \cdots\cup {\uparrow}_{\preccurlyeq}\alpha_k.
\end{equation*}
\end{lemma}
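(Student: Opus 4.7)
The plan is to argue by contradiction. Suppose that for some $\alpha \in \mathscr{I}_\lambda^n$ with $\operatorname{rank}\alpha = m < n$ (the case $m = n$ is trivial, as then ${\uparrow}_{\preccurlyeq}\alpha = \{\alpha\} \subseteq U(\alpha)$) and some open neighbourhood $U(\alpha)$, no finite family $\{\alpha_1, \ldots, \alpha_k\} \subseteq {\uparrow}_{\preccurlyeq}\alpha \setminus \{\alpha\}$ yields the stated inclusion. Let $X := (\mathscr{I}_\lambda^n \setminus \mathscr{I}_\lambda^{n-1}) \cap {\uparrow}_{\preccurlyeq}\alpha \setminus U(\alpha)$; the aim is to extract from this failure assumption an infinite subfamily of $X$ with sharp combinatorial disjointness, and then exploit shift-continuity to contradict feeble compactness.

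Using the graph identification of Remark~\ref{remark-1}, I would first construct inductively a sequence $\{\beta_j\}_{j \in \mathbb{N}} \subseteq X$ whose pair-sets $\beta_j \setminus \alpha$ are pairwise disjoint: at stage $j$ the finite collection $F_j := \{\alpha \cup \{(x,y)\} \colon (x,y) \in \beta_i \setminus \alpha,\ i \leqslant j\}$ consists of rank-$(m+1)$ proper extensions of $\alpha$, and the failure assumption supplies some $\beta_{j+1} \in X \setminus \bigcup_{\gamma \in F_j} {\uparrow}_{\preccurlyeq} \gamma$, which is exactly the required pair-disjointness. Next, I would apply the Sunflower Lemma to the new-domain sets $D_j := \operatorname{dom}\beta_j \setminus \operatorname{dom}\alpha$ and the new-range sets $R_j := \operatorname{ran}\beta_j \setminus \operatorname{ran}\alpha$ (each of size $n - m$), refining to an infinite subsequence with sunflower structure in both. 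Whenever a non-empty common core appears, the failure assumption is invoked anew --- essentially at the problem for $\alpha$ extended by the core --- to thin further. After at most $n - m$ such rounds one arrives at a subsequence with $D_j$'s pairwise disjoint and $R_j$'s pairwise disjoint.

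In this totally disjoint configuration the idempotent $e_j := \beta_j^{-1}\beta_j = \operatorname{id}_{\operatorname{ran}\beta_j}$ satisfies $\beta_j \cdot e_j = \beta_j$ while $\beta_i \cdot e_j = \alpha$ for $i \neq j$ (a short direct calculation using the pairwise disjointness of the ranges). By shift-continuity, right-translation by $e_j$ is continuous, so $W_j := \{\sigma \in \mathscr{I}_\lambda^n \colon \sigma e_j \in U(\alpha)\}$ is an open set that contains $\alpha$ and every $\beta_i$ with $i \neq j$, but misses $\beta_j$. Combining these separating sets with the symmetric ones obtained from left multiplication by $\beta_j \beta_j^{-1}$, with the openness of complements of finite subsets (from the $T_1$ property), and with the closedness of each principal filter ${\uparrow}_{\preccurlyeq}\beta$ (as the preimage of the closed singleton $\{\beta\}$ under the continuous map $\sigma \mapsto \beta\beta^{-1}\sigma$), one assembles an infinite locally finite family of non-empty open sets --- contradicting feeble compactness. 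The main technical obstacle is precisely this last step: each $W_j$ contains $\alpha$, so the raw family $\{W_j\}$ is not locally finite at $\alpha$, and peeling off this common accumulation point to obtain an honestly locally finite family is the delicate heart of the argument.
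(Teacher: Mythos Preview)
The paper does not give a proof of this lemma at all: it is quoted verbatim from \cite{Gutik-2017} (as ``Lemma~3'' there) and used as a black box. So there is no proof in the present paper to compare against.

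That said, your outline contains the correct core idea but then wanders into unnecessary territory and leaves the argument unfinished. The inductive construction in your second paragraph already produces a sequence $\{\beta_j\}_{j\in\mathbb{N}}\subseteq X$ with $\beta_i\cap\beta_j=\alpha$ for $i\neq j$ (a $\Delta$-system with kernel~$\alpha$). At that point you are done, and the Sunflower refinement on domains and ranges, the translation maps by $e_j$, and the ``delicate heart'' you flag are all superfluous.

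Here is the clean finish you are missing. Work inside the subspace ${\uparrow}_{\preccurlyeq}\alpha$, which by Lemma~2 of \cite{Gutik-2017} is open-and-closed in $(\mathscr{I}_\lambda^n,\tau)$ and hence feebly compact by Theorem~14 of \cite{Bagley-Connell-McKnight-Jr-1958}. Each $\beta_j$ is an isolated point (it lies in $\mathscr{I}_\lambda^n\setminus\mathscr{I}_\lambda^{n-1}$), so $\{\{\beta_j\}\colon j\in\mathbb{N}\}$ is an infinite family of non-empty open subsets of ${\uparrow}_{\preccurlyeq}\alpha$. It is locally finite there: at the point $\alpha$ the neighbourhood $U(\alpha)\cap{\uparrow}_{\preccurlyeq}\alpha$ meets none of the $\beta_j$ by construction; at any other point $\delta\in{\uparrow}_{\preccurlyeq}\alpha\setminus\{\alpha\}$ the open neighbourhood ${\uparrow}_{\preccurlyeq}\delta$ can contain at most one $\beta_j$, since $\beta_i,\beta_j\in{\uparrow}_{\preccurlyeq}\delta$ with $i\neq j$ would force $\delta\subseteq\beta_i\cap\beta_j=\alpha$, contradicting $\alpha\subsetneq\delta$. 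This contradicts feeble compactness of ${\uparrow}_{\preccurlyeq}\alpha$. The obstacle you identified---accumulation at points $\preccurlyeq\alpha$---simply disappears once you restrict to ${\uparrow}_{\preccurlyeq}\alpha$, where $\alpha$ is the unique minimum.
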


\begin{lemma}\label{lemma-3}
Let $\tau$ be a feebly compact topology on $\mathscr{I}_\lambda^1$ such that ${\uparrow}_{\preccurlyeq}\alpha$ is closed-and-open for any $\alpha\in \mathscr{I}_\lambda^1$. Then $\tau$ is compact.
\end{lemma}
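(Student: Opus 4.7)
The plan is to exploit the fact that $\mathscr{I}_\lambda^1$ has an extremely rigid order structure: every nonzero element has rank exactly $1$, so for any nonzero $\alpha$ the only element of $\mathscr{I}_\lambda^1$ lying above $\alpha$ in $(\mathscr{I}_\lambda^1,\preccurlyeq)$ is $\alpha$ itself (using the graph identification of Remark~\ref{remark-1}, a $\beta\supseteq\alpha$ with $|\beta|\leqslant 1$ must coincide with $\alpha$). Hence ${\uparrow}_{\preccurlyeq}\alpha=\{\alpha\}$ for every $\alpha\neq\boldsymbol 0$, and the hypothesis translates into the assertion that the singleton $\{\alpha\}$ is clopen for every nonzero $\alpha$; equivalently, every nonzero point of $(\mathscr{I}_\lambda^1,\tau)$ is isolated.

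From this I would next deduce that $\tau$ is Hausdorff. The set $\mathscr{I}_\lambda^1\setminus\{\boldsymbol 0\}$ is a union of open singletons and so is open, whence $\{\boldsymbol 0\}$ is closed; distinct nonzero points are separated by their own clopen singletons, and $\boldsymbol 0$ is separated from any $\alpha\neq\boldsymbol 0$ by the open sets $\mathscr{I}_\lambda^1\setminus\{\alpha\}$ and $\{\alpha\}$. With Hausdorffness in hand I may use the equivalent reformulation of feeble compactness recalled in the preliminaries: in a Hausdorff space, every locally finite family of nonempty open sets is finite.

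I would then prove compactness by contradiction. Suppose $\mathcal U$ is an open cover of $\mathscr{I}_\lambda^1$ admitting no finite subcover, and choose $U_0\in\mathcal U$ with $\boldsymbol 0\in U_0$. If $F:=\mathscr{I}_\lambda^1\setminus U_0$ were finite then $U_0$ together with finitely many clopen singletons $\{\alpha\}$, $\alpha\in F$, would refine to a finite subcover; hence $F$ is infinite, and I pick pairwise distinct $\alpha_n\in F$ for $n\in\mathbb N$. The family $\{\{\alpha_n\}\}_{n\in\mathbb N}$ consists of nonempty open sets, and it is locally finite: the neighborhood $U_0$ of $\boldsymbol 0$ meets none of the $\{\alpha_n\}$, while for any nonzero $\beta$ the open set $\{\beta\}$ meets at most one member of the family. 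This contradicts the Hausdorff characterization of feeble compactness, so $\tau$ must be compact.

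No substantial obstacle is anticipated; the clopen hypothesis strips almost all topological content away from the nonzero points, so the argument reduces to a short contradiction at $\boldsymbol 0$. The one subtlety worth flagging is that the statement imposes no separation axiom a priori, so the proof must first extract Hausdorffness from the clopen hypothesis before invoking the Hausdorff reformulation of feeble compactness.
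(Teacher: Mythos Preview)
Your proof is correct and rests on exactly the observation the paper singles out: for every nonzero $\alpha\in\mathscr{I}_\lambda^1$ one has ${\uparrow}_{\preccurlyeq}\alpha=\{\alpha\}$, so the hypothesis makes every nonzero point clopen. The paper's own justification is a single sentence (``follows from the fact that all nonzero elements of $\mathscr{I}_\lambda^1$ are closed-and-open''), and your argument is a faithful, more detailed expansion of that remark; in particular, your extraction of Hausdorffness before invoking the locally-finite-family characterization of feeble compactness is a point the paper leaves implicit.
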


The statement of Lemma~\ref{lemma-3} follows from the fact that all nonzero elements of the semigroup $\mathscr{I}_\lambda^1$ are closed-and-open in $(\mathscr{I}_\lambda^1,\tau)$.

A family of non-empty sets $\{A_i\colon i\in \mathscr{I}\}$  is called a \emph{$\Delta$-system} (a \emph{sunflower} or a \emph{$\Delta$-family}) if the pairwise intersections of the members are the same, i.e., $A_i\cap A_j=S$ for some set $S$ (for $i\neq j$ in $\mathscr{I}$) \cite{Komjath-Totik-2006}.
The following statement is well known as the \emph{Sunflower Lemma} or the \emph{Lemma about a $\Delta$-system} (see \cite[p. 107]{Komjath-Totik-2006}).

\begin{lemma}\label{Subflower_Lemma}
Every infinite family of $n$-element sets $(n<\omega)$ contains an infinite $\Delta$-subfamily.
\end{lemma}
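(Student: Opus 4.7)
The plan is to prove the Sunflower Lemma by induction on $n$, which is the standard approach going back to Erd\H{o}s and Rado. The base case $n=1$ is immediate: an infinite family of singletons is either eventually constant (giving a $\Delta$-family with kernel that single element) or contains infinitely many distinct singletons (giving a $\Delta$-family with empty kernel). So the bulk of the work is the inductive step, where I assume the statement for $n-1$ and prove it for $n$, given an infinite family $\{A_i\colon i\in\mathscr{I}\}$ of $n$-element sets.

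The inductive step splits naturally into two cases according to a dichotomy about element multiplicity. \emph{Case 1:} there exists a point $x$ that belongs to infinitely many of the $A_i$. Restrict to the infinite subfamily $\{A_i\colon x\in A_i\}$ and consider the family $\{A_i\setminus\{x\}\}$, which is an infinite family of $(n-1)$-element sets. By the inductive hypothesis, it contains an infinite $\Delta$-subfamily with some common kernel $S'$. Re-adjoining $x$ to every member produces an infinite $\Delta$-subfamily of the original family with kernel $S'\cup\{x\}$. \emph{Case 2:} every point lies in only finitely many of the $A_i$. Then I construct an infinite pairwise disjoint subfamily greedily: pick $A_{i_1}$ arbitrarily; since $A_{i_1}$ is finite and every element of it belongs to only finitely many $A_i$, only finitely many members of the family meet $A_{i_1}$, so the remaining (still infinite) subfamily contains some $A_{i_2}$ disjoint from $A_{i_1}$; iterate. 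The resulting pairwise disjoint sequence is a $\Delta$-family with empty kernel.

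I do not expect any serious obstacle here: the argument is a clean dichotomy and the greedy construction in Case~2 uses only the finiteness of each $A_i$ together with the case hypothesis. The only minor point of care is making sure that the $\Delta$-kernel condition is stated correctly, namely that \emph{all} pairwise intersections coincide rather than just being nonempty; in Case~1 this is guaranteed because inserting the common element $x$ into each member of an inductively produced $\Delta$-family preserves the uniformity of pairwise intersections, and in Case~2 pairwise disjointness trivially yields the constant kernel $\varnothing$. The induction therefore closes, completing the proof.
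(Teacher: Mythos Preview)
Your proof is correct and is the standard Erd\H{o}s--Rado argument. Note, however, that the paper does not supply its own proof of this lemma: it is stated without proof and attributed to \cite{Komjath-Totik-2006}, so there is no in-paper argument to compare against. Your write-up would serve as a self-contained justification if one were desired.
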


%\newpage

\begin{proposition}\label{proposition-6}
Let $n$ be an arbitrary positive integer and $\lambda$ be an arbitrary infinite cardinal. Then every
feebly compact shift-continuous $T_1$-topology $\tau$ on $\mathscr{I}_\lambda^n$ is sequentially pracompact.
\end{proposition}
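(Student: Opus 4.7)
The plan is to take $D=\mathscr{I}_\lambda^n\setminus\mathscr{I}_\lambda^{n-1}$, the set of partial bijections of rank exactly $n$, and verify the two clauses in the definition of sequential pracompactness: that $D$ is dense in $(\mathscr{I}_\lambda^n,\tau)$, and that every sequence in $D$ has a convergent subsequence. The topological input will be Lemma~\ref{lemma-2.10}; the combinatorial lever will be the Sunflower Lemma (Lemma~\ref{Subflower_Lemma}), accessed through Remark~\ref{remark-1}, which identifies every element of $\mathscr{I}_\lambda^n$ with its graph.

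For density, I fix $\alpha\in\mathscr{I}_\lambda^n$ of rank $m<n$ and a neighbourhood $U(\alpha)$. Lemma~\ref{lemma-2.10} yields $\alpha_1,\ldots,\alpha_k\in{\uparrow}_{\preccurlyeq}\alpha\setminus\{\alpha\}$ with $(\mathscr{I}_\lambda^n\setminus\mathscr{I}_\lambda^{n-1})\cap{\uparrow}_{\preccurlyeq}\alpha\subseteq U(\alpha)\cup\bigcup_{j=1}^{k}{\uparrow}_{\preccurlyeq}\alpha_j$. Each $\alpha_j\setminus\alpha$ is a nonempty finite set of pairs in $(\lambda\setminus\operatorname{dom}\alpha)\times(\lambda\setminus\operatorname{ran}\alpha)$, and the finite union $\bigcup_j(\alpha_j\setminus\alpha)$ avoids all but finitely many such pairs while $\lambda$ is infinite. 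Hence one can pick $n-m$ additional pairs outside this union, with pairwise distinct first coordinates in $\lambda\setminus\operatorname{dom}\alpha$ and pairwise distinct second coordinates in $\lambda\setminus\operatorname{ran}\alpha$. Adjoining them to $\alpha$ gives $\beta\in D$ with $\alpha\subseteq\beta$ and $\alpha_j\not\subseteq\beta$ for every $j$, so $\beta\in D\cap U(\alpha)$.

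For the sequence condition, given $\{\beta_i\}\subseteq D$, if only finitely many distinct values occur a constant subsequence works; otherwise I pass to an injective subsequence and apply the Sunflower Lemma to its family of $n$-element graphs, obtaining an infinite $\Delta$-subfamily with kernel $S$. Since $S$ is contained in the graph of a partial bijection, $S$ itself is the graph of some $\alpha\in\mathscr{I}_\lambda^n$; the petals $T_k=\beta_{i_k}\setminus S$ are nonempty and pairwise disjoint, forcing $\operatorname{rank}\alpha<n$. To see $\beta_{i_k}\to\alpha$, fix $U(\alpha)$ and use Lemma~\ref{lemma-2.10} to cover $(\mathscr{I}_\lambda^n\setminus\mathscr{I}_\lambda^{n-1})\cap{\uparrow}_{\preccurlyeq}\alpha$ by $U(\alpha)$ together with finitely many upper sets ${\uparrow}_{\preccurlyeq}\alpha^{(j)}$ with $\alpha\subsetneq\alpha^{(j)}$. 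Each $\beta_{i_k}$ contains $\alpha$ and has rank $n$, hence lies in this cover; if $\beta_{i_k}\in{\uparrow}_{\preccurlyeq}\alpha^{(j)}$ then the nonempty set $\alpha^{(j)}\setminus\alpha$ is forced into the petal $T_k$, and pairwise disjointness of the petals pins down $k$ uniquely for each $j$. Thus $\beta_{i_k}\in U(\alpha)$ for cofinitely many $k$.

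The main obstacle is precisely this bridge between Lemma~\ref{lemma-2.10}, a topological statement carving out neighbourhoods by excising finitely many upper sets, and the Sunflower Lemma, a purely combinatorial tool. What makes the two fit is that in a $\Delta$-system the petals are pairwise disjoint, not merely almost disjoint, so any finite proper extension $\alpha^{(j)}\supsetneq\alpha$ embeds into at most one petal; without this full disjointness the final counting step would break down. Remark~\ref{remark-1} is what translates between the two languages by letting the semigroup order $\preccurlyeq$ be read off as graph inclusion.
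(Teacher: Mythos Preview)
Your proof is correct and rests on the same core idea as the paper's: take $D=\mathscr{I}_\lambda^n\setminus\mathscr{I}_\lambda^{n-1}$, apply the Sunflower Lemma to a sequence of rank-$n$ elements, and identify the kernel of the resulting $\Delta$-system as the limit of the extracted subsequence. The only differences are in packaging --- the paper argues by contradiction, using the disjoint-petals property to exhibit an infinite locally finite family of open singletons (violating feeble compactness directly), whereas you prove convergence constructively by routing the same disjoint-petals count through Lemma~\ref{lemma-2.10}, and you re-derive the density of $D$ from Lemma~\ref{lemma-2.10} rather than citing Proposition~2 of~\cite{Gutik-2017}.
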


\begin{proof}
Suppose to the contrary that there exists a feebly compact shift-continuous $T_1$-topology $\tau$ on $\mathscr{I}_\lambda^n$ which is not sequentially countably pracompact. Then every dense subset $D$ of $(\mathscr{I}_\lambda^n,\tau)$ contains a sequence of points from $D$ which has no  a convergent subsequence.

By Proposition~2 of \cite{Gutik-2017} the subset $\mathscr{I}_\lambda^n\setminus\mathscr{I}_\lambda^{n-1}$ is dense in $\left(\mathscr{I}_\lambda^n,\tau\right)$ and by Lemma~2 from \cite{Gutik-2017}  every point of the set $\mathscr{I}_\lambda^n\setminus\mathscr{I}_\lambda^{n-1}$ is isolated in $\left(\mathscr{I}_\lambda^n,\tau\right)$. Then the set $\mathscr{I}_\lambda^n\setminus\mathscr{I}_\lambda^{n-1}$ contains an infinite sequence of points $\{\chi_p\colon p\in \mathbb{N}\}$ which has not a convergent subsequence. If we identify elements of the semigroups with their graphs then by Lemma~\ref{Subflower_Lemma} the sequence $\{\chi_p\colon p\in \mathbb{N}\}$ contains an infinite $\Delta$-subfamily, that is an infinite subsequence $\{\chi_{p_i}\colon i\in \mathbb{N}\}$ such that there exists $\chi\in \mathscr{I}_\lambda^n$ such that $\chi_{p_i}\cap \chi_{p_j}=\chi$  for any distinct $i,j\in\mathbb{N}$.

Suppose that $\chi=\boldsymbol{0}$ is the zero of the semigroup $\mathscr{I}_\lambda^n$. Since the sequence $\{\chi_{p_i}\colon i\in \mathbb{N}\}$ is an infinite $\Delta$-subfamily, the intersection $\{\chi_{p_i}\colon i\in \mathbb{N}\}\cap{\uparrow}_{\preccurlyeq}\gamma$ contains at most one set for every non-zero element $\gamma\in\mathscr{I}_\lambda^n$. Thus $(\mathscr{I}_\lambda^n,\tau)$ contains an infinite locally finite family of open non-empty subsets which contradicts the feeble compactness of $(\mathscr{I}_\lambda^n,\tau)$.

If $\chi$ is a non-zero element of the  semigroup $\mathscr{I}_\lambda^n$ then by Lemma~2 from \cite{Gutik-2017}, ${\uparrow}_{\preccurlyeq}\chi$ is an open-and-closed subspace of $(\mathscr{I}_\lambda^n,\tau)$, and hence by Theorem~14 from \cite{Bagley-Connell-McKnight-Jr-1958} the space ${\uparrow}_{\preccurlyeq}\chi$ is feebly compact. We observe that the element $\chi$ is the minimum of the poset ${\uparrow}_{\preccurlyeq}\chi$. Since the sequence $\{\chi_{p_i}\colon i\in \mathbb{N}\}$ is an infinite $\Delta$-subfamily, the intersection $\{\chi_{p_i}\colon i\in \mathbb{N}\}\cap{\uparrow}_{\preccurlyeq}\gamma$ contains at most one set for every element $\gamma\in{\uparrow}_{\preccurlyeq}\chi\setminus\{\chi\}$. Thus the subspace ${\uparrow}_{\preccurlyeq}\chi$ of $(\mathscr{I}_\lambda^n,\tau)$ contains an infinite locally finite family of open non-empty subsets which contradicts the feeble compactness of $(\mathscr{I}_\lambda^n,\tau)$.
\end{proof}

\begin{proposition}\label{proposition-7}
Let $n$ be an arbitrary positive integer and $\lambda$ be an arbitrary infinite cardinal. Then every feebly compact shift-continuous $T_1$-topology $\tau$ on $\mathscr{I}_\lambda^n$ is totally countably pracompact.
\end{proposition}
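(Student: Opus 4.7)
The plan is to mirror the framework of Proposition~\ref{proposition-6}: take $D=\mathscr{I}_\lambda^n\setminus\mathscr{I}_\lambda^{n-1}$ as the witnessing dense subset (dense by Proposition~2 of \cite{Gutik-2017}, with every point isolated by Lemma~2 of \cite{Gutik-2017}). Given any sequence in $D$, after passing to an injective subsequence (the eventually-constant case is trivial) and applying Lemma~\ref{Subflower_Lemma} to the graphs viewed as $n$-element subsets of $\lambda\times\lambda$, I extract an infinite $\Delta$-subfamily $\{\chi_{p_i}\}$ with common core $\chi\in\mathscr{I}_\lambda^n$. The target is to show that the closure of $\{\chi_{p_i}:i\in\mathbb{N}\}$ in $(\mathscr{I}_\lambda^n,\tau)$ is compact.

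The first step is to establish that $\chi_{p_i}\to\chi$. For any open neighborhood $U(\chi)$, Lemma~\ref{lemma-2.10} supplies finitely many $\alpha_1,\dots,\alpha_k\in{\uparrow}_{\preccurlyeq}\chi\setminus\{\chi\}$ such that
$$\mathscr{I}_\lambda^n\setminus\mathscr{I}_\lambda^{n-1}\cap{\uparrow}_{\preccurlyeq}\chi\subseteq U(\chi)\cup{\uparrow}_{\preccurlyeq}\alpha_1\cup\cdots\cup{\uparrow}_{\preccurlyeq}\alpha_k.$$
Each $\chi_{p_i}$ lies in ${\uparrow}_{\preccurlyeq}\chi$ since $\chi\subseteq\chi_{p_i}$, and the $\Delta$-property forces at most one $\chi_{p_i}$ into each ${\uparrow}_{\preccurlyeq}\alpha_j$ (two such terms would give $\alpha_j\subseteq\chi_{p_i}\cap\chi_{p_{i'}}=\chi$, contradicting $\alpha_j\supsetneq\chi$). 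Hence $U(\chi)$ omits only finitely many terms of the sequence, yielding the convergence.

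I would then identify the closure as $\{\chi_{p_i}:i\in\mathbb{N}\}\cup\{\chi\}$. Since ${\uparrow}_{\preccurlyeq}\chi$ is closed (clopen when $\chi\neq\boldsymbol{0}$ by the fact recalled in the proof of Proposition~\ref{proposition-6}; the whole space when $\chi=\boldsymbol{0}$), the closure is contained in ${\uparrow}_{\preccurlyeq}\chi$. A hypothetical extra point $\alpha\in\overline{\{\chi_{p_i}\}}\setminus(\{\chi_{p_i}\}\cup\{\chi\})$ would strictly extend $\chi$, so $\alpha\neq\boldsymbol{0}$ and ${\uparrow}_{\preccurlyeq}\alpha$ would be a clopen neighborhood of $\alpha$ containing at most one term of the sequence (by the same $\Delta$-argument). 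Removing that single term using the $T_1$ axiom produces an open neighborhood of $\alpha$ disjoint from $\{\chi_{p_i}:i\in\mathbb{N}\}$, contradicting $\alpha\in\overline{\{\chi_{p_i}\}}$. A convergent sequence together with its limit is compact by a one-line open-cover argument, so the closure is compact and $D$ witnesses total countable pracompactness. The main obstacle is ruling out spurious accumulation points strictly above $\chi$; this is precisely where the clopen-ness of principal filters combined with $T_1$ separation and the $\Delta$-property proves essential, and the case $\chi=\boldsymbol{0}$ needs no separate treatment because any putative extra accumulation point is automatically nonzero.
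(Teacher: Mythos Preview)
Your proof is correct and actually proceeds along a different line from the paper's own argument for this proposition. The paper does \emph{not} invoke the Sunflower Lemma here; instead it runs a dichotomy-and-recurse argument: given a sequence in $D=\mathscr{I}_\lambda^n\setminus\mathscr{I}_\lambda^{n-1}$, either every nonzero up-set ${\uparrow}_{\preccurlyeq}\eta$ meets it finitely (and then Lemma~\ref{lemma-2.10} shows $\{\boldsymbol{0}\}\cup\{\chi_p\}$ is compact), or some ${\uparrow}_{\preccurlyeq}\eta^1$ meets it infinitely, in which case one passes to that clopen subspace and repeats. Termination comes from the finiteness of chains in $(\mathscr{I}_\lambda^n,\preccurlyeq)$, and the final step appeals to Lemma~\ref{lemma-2} together with Lemma~\ref{lemma-3} when the recursion reaches a poset order-isomorphic to $\mathscr{I}_\lambda^1$.

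Your route is more economical: one application of Lemma~\ref{Subflower_Lemma} produces a $\Delta$-subfamily whose core $\chi$ is automatically the limit (via Lemma~\ref{lemma-2.10} and the observation that at most one petal can lie above any $\alpha\supsetneq\chi$), and the same observation pins down the closure as $\{\chi_{p_i}\}\cup\{\chi\}$. This simultaneously re-proves Proposition~\ref{proposition-6} and Proposition~\ref{proposition-7} in a single stroke, and avoids the inductive bookkeeping. The paper's recursive approach, on the other hand, is independent of the Sunflower Lemma and makes the role of the poset structure (finite chains, Lemma~\ref{lemma-2}) more explicit. Both arguments ultimately rest on Lemma~\ref{lemma-2.10} and the clopenness of principal filters, but yours extracts the compact witness in one shot rather than by descent.
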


\begin{proof}
By Proposition~2 of \cite{Gutik-2017} the subset $\mathscr{I}_\lambda^n\setminus\mathscr{I}_\lambda^{n-1}$ is dense in $\left(\mathscr{I}_\lambda^n,\tau\right)$ and by Lemma~2 from \cite{Gutik-2017}  every point of the set $\mathscr{I}_\lambda^n\setminus\mathscr{I}_\lambda^{n-1}$ is isolated in $\left(\mathscr{I}_\lambda^n,\tau\right)$. We put $D=\mathscr{I}_\lambda^n\setminus\mathscr{I}_\lambda^{n-1}$. Fix an arbitrary sequence $\{\chi_p\colon p\in \mathbb{N}\}$ of points of $D$.

It is obvious that at least one of the following conditions holds:
\begin{enumerate}
  \item[\textbf{(1)}] for any $\eta\in \mathscr{I}_\lambda^n\setminus\{\boldsymbol{0}\}$ the set ${\uparrow}_{\preccurlyeq}\eta \cap \{\chi_p\colon p\in \mathbb{N}\}$ is finite;
  \item[\textbf{(2)}] there exists $\eta\in \mathscr{I}_\lambda^n\setminus\{\boldsymbol{0}\}$ such that the set ${\uparrow}_{\preccurlyeq}\eta \cap \{\chi_p\colon p\in \mathbb{N}\}$ is infinite.
\end{enumerate}

Suppose case \textbf{(1)} holds. By Lemma~2 of \cite{Gutik-2017} for every point $\alpha\in \mathscr{I}_\lambda^n\setminus\{0\}$ there exists an open neighbourhood $U(\alpha)$ of $\alpha$ in $\left(\mathscr{I}_\lambda^n,\tau\right)$ such that $U(\alpha)\subseteq {\uparrow}_{\preccurlyeq}\alpha$ and hence our assumption implies that zero $\boldsymbol{0}$ is a unique accumulation point of the sequence $\{\chi_p\colon p\in \mathbb{N}\}$. By Lemma~\ref{lemma-2.10}  for an arbitrary open neighbourhood $W(\boldsymbol{0})$ of zero $\boldsymbol{0}$ in $\left(\mathscr{I}_\lambda^n,\tau\right)$ there exist finitely many nonzero elements $\eta_1,\ldots, \eta_{k}\in \mathscr{I}_\lambda^n$ such that
\begin{equation*}
  \left(\mathscr{I}_\lambda^n\setminus\mathscr{I}_\lambda^{n-1}\right)\subseteq W(\boldsymbol{0})\cup {\uparrow}_{\preccurlyeq}{\eta_1}\cup\cdots\cup {\uparrow}_{\preccurlyeq}{\eta_{k}},
\end{equation*}
and hence we get that $\{\boldsymbol{0}\}\cup\{\chi_p\colon p\in \mathbb{N}\}$ is a compact subset of $\left(\mathscr{I}_\lambda^n,\tau\right)$.

Suppose case \textbf{(2)} holds: there exists $\eta^1\in \mathscr{I}_\lambda^n\setminus\{\boldsymbol{0}\}$ such that the set ${\uparrow}_{\preccurlyeq}\eta^1 \cap \{\chi_p\colon p\in \mathbb{N}\}$ is infinite. Then by Lemma~2 of \cite{Gutik-2017}, ${\uparrow}_{\preccurlyeq}y^1$ is an open-and-closed subset of $\left(\mathscr{I}_\lambda^n,\tau\right)$ and hence by Theorem~14 from \cite{Bagley-Connell-McKnight-Jr-1958} the subspace ${\uparrow}_{\preccurlyeq}\eta^{1}$ of $\left(\mathscr{I}_\lambda^n,\tau\right)$ is feebly compact. By Lemma~\ref{lemma-2} the poset $({\uparrow}_{\preccurlyeq}\eta^1,\preccurlyeq)$ is order isomorphic to the poset $(\mathscr{I}_\lambda^{m_1},\preccurlyeq)$ for some positive integer $m_{1}=2,\ldots,n-1$.

We put $\{\chi^1_p\colon p\in \mathbb{N}\}$ is a subsequence of $\{\chi_p\colon p\in \mathbb{N}\}$ such that $\{\chi^1_p\colon p\in \mathbb{N}\}={\uparrow}_{\preccurlyeq}\eta^1 \cap \{\chi_p\colon p\in \mathbb{N}\}$. Then for the feebly compact poset $({\uparrow}_{\preccurlyeq}\eta^1,\preccurlyeq)$ and the sequence $\{\chi^1_p\colon p\in \mathbb{N}\}$ at least one of the following conditions holds:
\begin{enumerate}
  \item[\textbf{(1)$_*$}] for any $\eta\in {\uparrow}_{\preccurlyeq}\eta^1\setminus\{\eta^1\}$ the set ${\uparrow}_{\preccurlyeq}\eta \cap \{\chi_p^1\colon p\in \mathbb{N}\}$ is finite;
  \item[\textbf{(2)$_*$}] there exists $\eta\in {\uparrow}_{\preccurlyeq}\eta^1\setminus\{\eta^1\}$ such that the set ${\uparrow}_{\preccurlyeq}\eta \cap \{\chi_p^1\colon p\in \mathbb{N}\}$ is infinite.
\end{enumerate}

Since every chain in the poset $({\uparrow}_{\preccurlyeq}\eta^1,\preccurlyeq)$ is finite, repeating finitely many times our above procedure we obtain two chains of the length $s\leqslant n$:
\begin{itemize}
  \item[$(i)$] the chain $\boldsymbol{0}\preccurlyeq\eta^1\preccurlyeq\cdots\preccurlyeq\eta^s$ of distinct  elements of the poset $({\uparrow}_{\preccurlyeq}\eta^1,\preccurlyeq)$; \qquad and
  \item[$(ii)$] the chain $\{\chi_p\colon p\in \mathbb{N}\}\supseteq \{\chi^1_p\colon p\in \mathbb{N}\}\supseteq\cdots\supseteq\{\chi^s_p\colon p\in \mathbb{N}\}$ of infinite subsequences of the sequence $\{\chi_p\colon p\in \mathbb{N}\}$,
\end{itemize}
such that the following conditions hold:
\begin{itemize}
  \item[$(a)$] $\{\chi^j_p\colon p\in \mathbb{N}\}\subseteq{\uparrow}_{\preccurlyeq}\eta^j$ for every $j=1,\ldots,s$;
  \item[$(b)$] either $\{\chi^s_p\colon p\in \mathbb{N}\}\cup\{\eta^s\}$ is a compact subset of the poset $({\uparrow}_{\preccurlyeq}\eta^1,\preccurlyeq)$ or the poset $({\uparrow}_{\preccurlyeq}\eta^s,\preccurlyeq)$ is order isomorphic to the poset $(\mathscr{I}_\lambda^{1},\preccurlyeq)$.
\end{itemize}

If $\{\chi^s_p\colon p\in \mathbb{N}\}\cup\{\eta^s\}$ is a compact subset of $\left(\mathscr{I}_\lambda^n,\tau\right)$ then our above part of the proof implies that the sequence $\{\chi_p\colon p\in \mathbb{N}\}$ has the subsequence $\{\chi^s_p\colon p\in \mathbb{N}\}$ with the compact closure.

If the poset $({\uparrow}_{\preccurlyeq}\eta^s,\preccurlyeq)$ is order isomorphic to the poset $(\mathscr{I}_\lambda^{1},\preccurlyeq)$, then by Lemma~2 of \cite{Gutik-2017} the subspace ${\uparrow}_{\preccurlyeq}\eta^s$ of $\left(\mathscr{I}_\lambda^n,\tau\right)$ is open-and-closed and hence by Lemmas~\ref{lemma-2} and~\ref{lemma-3}  the poset $({\uparrow}_{\preccurlyeq}\eta^s,\preccurlyeq)$ is compact. Then the inclusion $\{\chi^s_p\colon p\in \mathbb{N}\}\subseteq{\uparrow}_{\preccurlyeq}\eta^s$ implies that the sequence $\{\chi_p\colon p\in \mathbb{N}\}$ has the subsequence $\{\chi^s_p\colon p\in \mathbb{N}\}$ with the compact closure. This completed the proof of the proposition.
\end{proof}

We summarise our results in the  following theorem.

\begin{theorem}\label{theorem-8}
Let $n$ be any positive integer and $\lambda$ be any infinite cardinal. Then for any $T_1$-semitopological semigroup $\mathscr{I}_\lambda^{n}$ the following conditions are equivalent:
\begin{itemize}
  \item[$(i)$] $\mathscr{I}_\lambda^{n}$ is sequentially pracompact;
  \item[$(ii)$] $\mathscr{I}_\lambda^{n}$ is totally countably pracompact;
  \item[$(iii)$] $\mathscr{I}_\lambda^{n}$ is feebly compact.
\end{itemize}
\end{theorem}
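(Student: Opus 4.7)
The plan is to observe that this theorem is essentially a packaging result: two of the three implications have already been established as Propositions~\ref{proposition-6} and~\ref{proposition-7}, while the reverse implications are immediate from the general diagram of compactness-type properties recalled in the introduction. So the proof should be short and mostly a matter of citing what is already available.

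First I would prove $(iii)\Rightarrow(i)$ and $(iii)\Rightarrow(ii)$, both of which are direct: Proposition~\ref{proposition-6} gives that every feebly compact shift-continuous $T_1$-topology on $\mathscr{I}_\lambda^n$ is sequentially pracompact, and Proposition~\ref{proposition-7} gives that every such topology is totally countably pracompact. Hence, starting from $(iii)$, both of the other two properties follow without any further argument specific to $\mathscr{I}_\lambda^n$.

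For the converse directions $(i)\Rightarrow(iii)$ and $(ii)\Rightarrow(iii)$, I would just quote the general diagram in the introduction (which is assembled from results in \cite{Arkhangelskii-1992} and others). Namely, sequential pracompactness implies countable pracompactness, total countable pracompactness implies countable pracompactness, and countable pracompactness implies feeble compactness. So each of $(i)$ and $(ii)$ implies $(iii)$ for any $T_1$-space whatsoever; this has nothing to do with the semigroup structure and requires no further work.

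There is no substantial obstacle here, since the real content of the theorem has been isolated in the two preceding propositions. The only thing one should be careful about is to make sure that the chain sequentially pracompact $\Rightarrow$ countably pracompact $\Rightarrow$ feebly compact (and analogously for totally countably pracompact) is stated in a form we may invoke; all of these are listed explicitly in the diagram of the introduction, so a single sentence citing them together with Propositions~\ref{proposition-6} and~\ref{proposition-7} suffices.
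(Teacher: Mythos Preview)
Your proposal is correct and matches the paper's own proof essentially verbatim: the paper declares $(i)\Rightarrow(iii)$ and $(ii)\Rightarrow(iii)$ trivial (via the general implications in the diagram) and cites Propositions~\ref{proposition-6} and~\ref{proposition-7} for the converse directions. There is nothing to add or change.
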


\begin{proof}
Implications $(i)\Rightarrow(iii)$ and $(ii)\Rightarrow(iii)$ are trivial. The corresponding their converse implications $(iii)\Rightarrow(i)$ and $(iii)\Rightarrow(ii)$ follow from Propositions~\ref{proposition-6} and \ref{proposition-7}, respectively.
\end{proof}

It is well known that the (Tychonoff) product of pseudocompact spaces is not necessarily pseudocompact (see \cite[Section~3.10]{Engelking-1989}). On the other hand Comfort and Ross in \cite{Comfort-Ross-1966} proved that a Tychonoff product of an arbitrary family of pseudocompact topological groups is a pseudocompact topological group.  The Comfort--Ross Theorem is generalized in \cite{Banakh-Ravsky-2020} and it is proved that a Tychonoff product of an arbitrary non-empty family of feebly compact paratopological groups is feebly compact. Also, a counterpart of the Comfort--Ross Theorem for pseudocompact primitive topological inverse semigroups and  primitive inverse semiregular feebly compact semitopological semigroups with closed maximal subgroups were proved in \cite{Gutik-Pavlyk-2013} and \cite{Gutik-Ravsky-2015}, respectively.

Since a Tychonoff product of H-closed spaces is H-closed (see \cite[Theorem~3]{Chevalley-Frink-1941} or \cite[3.12.5~(d)]{Engelking-1989}) Theorem~\ref{theorem-8} implies a counterpart of the Comfort--Ross Theorem for feebly compact semitopological semigroups $\mathscr{I}_\lambda^{n}$:

\begin{corollary}\label{corollary-9}
Let $\left\{\mathscr{I}_{\lambda_i}^{n_i}\colon i\in\mathscr{J}\right\}$ be a family of non-empty feebly compact $T_1$-semitopological semigroups and $n_i\in\mathbb{N}$ for all $i\in\mathscr{I}$. Then the Tychonoff product $\prod\left\{\mathscr{I}_{\lambda_i}^{n_i}\colon i\in\mathscr{I}\right\}$ is feebly compact.
\end{corollary}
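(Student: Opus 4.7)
The plan is to reduce Corollary~\ref{corollary-9} to two known facts: the equivalence of feeble compactness with H-closedness on $\mathscr{I}_\lambda^n$ (established in \cite{Gutik-2017} and recalled in the introduction), and the classical Chevalley--Frink theorem that Tychonoff products of H-closed spaces are H-closed. The semigroup operation on the product plays no essential role in the feeble compactness argument; only the topological structure of each factor matters.

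First I would invoke the characterization from \cite{Gutik-2017} recalled in the introduction: for any shift-continuous $T_1$-topology on $\mathscr{I}_\lambda^n$, feeble compactness is equivalent to condition $(iv)$, namely that $(\mathscr{I}_\lambda^n,\tau)$ is H-closed. Consequently, each factor $\mathscr{I}_{\lambda_i}^{n_i}$ in the given family is H-closed; in particular every factor is Hausdorff, which is the prerequisite for applying a product theorem for H-closed spaces.

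Next I would apply the Chevalley--Frink theorem (\cite[Theorem~3]{Chevalley-Frink-1941}, equivalently \cite[3.12.5(d)]{Engelking-1989}) to the family $\{\mathscr{I}_{\lambda_i}^{n_i}\colon i\in\mathscr{I}\}$ to conclude that the Tychonoff product $\prod\{\mathscr{I}_{\lambda_i}^{n_i}\colon i\in\mathscr{I}\}$ is itself H-closed. Finally, since every H-closed space is feebly compact (as noted in the introduction, following \cite{Gutik-Ravsky-2015a}), we obtain the desired feeble compactness of the product.

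The main obstacle here is essentially absent — this is a straightforward chaining of cited results. The only point of care is that Theorem~\ref{theorem-8} of the present paper does not by itself mention H-closedness among the equivalent conditions; the bridge to H-closedness must be supplied by invoking the fuller equivalence from \cite{Gutik-2017}. Once that link is in place, the result follows without further work, exactly as the parenthetical remark preceding the corollary indicates.
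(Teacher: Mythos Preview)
Your proposal is correct and follows essentially the same route as the paper: the sentence immediately preceding the corollary already indicates the argument, namely pass from feeble compactness to H-closedness on each factor (via the equivalences of \cite{Gutik-2017}), apply the Chevalley--Frink product theorem for H-closed spaces, and then use that H-closed implies feebly compact. Your observation that Theorem~\ref{theorem-8} alone does not supply the H-closedness bridge and that one must invoke \cite{Gutik-2017} is accurate and matches what the paper is tacitly doing.
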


\begin{definition}\label{definition-10}
If $\left\{X_i\colon i\in \mathscr{I}\right\}$ is an uncountable family of sets, $X=\prod\left\{X_i\colon i\in \mathscr{J}\right\}$ is their Cartesian product and $p$ is a point in $X$,
then the subset
\begin{equation*}
  \Sigma(p,X)=\left\{ x \in X\colon \left| \left\{ i\in\mathscr{J}\colon x(i)\neq p(i)\right\}\right|\leqslant\omega\right\}
\end{equation*}
of $X$ is called the \emph{$\Sigma$-product} of $\left\{X_i\colon i\in \mathscr{J}\right\}$ with the basis point $p\in X$. In the case when $\left\{X_i\colon i\in \mathscr{J}\right\}$ is a family of topological spaces we assume that $\Sigma(p,X)$ is a subspace of the Tychonoff product $X=\prod\left\{X_i\colon i\in \mathscr{J}\right\}$.
\end{definition}

It is obvious that if $\left\{X_i\colon i\in \mathscr{J}\right\}$ is a family of semigroups then $X=\prod\left\{X_i\colon i\in \mathscr{J}\right\}$  is a semigroup as well. Moreover $\Sigma(p,X)$ is a subsemigroup of $X$ for arbitrary idempotent $p\in X$. Theorem~\ref{theorem-8} and Proposition~2.2 of \cite{Gutik-Ravsky-20??} imply the following corollary.

\begin{corollary}\label{corollary-13}
Let $\left\{\mathscr{I}_{\lambda_i}^{n_i}\colon i\in\mathscr{J}\right\}$  be a family of non-empty feebly compact $T_1$-semitopologi\-cal semigroups and $n_i\in\mathbb{N}$ for all $i\in\mathscr{J}$. Then for every idempotent $p$ of the product $X=\prod\left\{\mathscr{I}_{\lambda_i}^{n_i}\colon i\in\mathscr{J}\right\}$ the $\Sigma$-product $\Sigma(p,X)$ is feebly compact.
\end{corollary}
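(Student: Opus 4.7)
The plan is to apply Theorem~\ref{theorem-8} factorwise to upgrade the feeble compactness hypothesis on each coordinate space to the stronger property of total countable pracompactness, and then to feed this into the cited Proposition~2.2 of \cite{Gutik-Ravsky-20??}, which is the actual heavy lifting.

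More concretely, I would first observe that for each $i\in\mathscr{J}$ the semitopological semigroup $\mathscr{I}_{\lambda_i}^{n_i}$ is $T_1$ and feebly compact by hypothesis, so by the equivalence $(iii)\Rightarrow(ii)$ of Theorem~\ref{theorem-8} it is totally countably pracompact. In particular, each factor contains a dense subset $D_i$ such that every sequence in $D_i$ has a subsequence with compact closure in $\mathscr{I}_{\lambda_i}^{n_i}$. This is exactly the hypothesis under which Proposition~2.2 of \cite{Gutik-Ravsky-20??} produces feeble compactness in a $\Sigma$-product: the statement of that proposition is that for any non-empty family of totally countably pracompact spaces $\{X_i\colon i\in\mathscr{J}\}$ and any basis point $p\in X=\prod X_i$, the $\Sigma$-product $\Sigma(p,X)$ is feebly compact.

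Applying that proposition with $X_i=\mathscr{I}_{\lambda_i}^{n_i}$ and the given idempotent $p\in X$ as basis point yields the conclusion immediately; no separate argument involving the multiplication is required, because the statement is purely a property of the underlying topological $\Sigma$-product and does not depend on $p$ being an idempotent (the idempotency of $p$ is only what makes $\Sigma(p,X)$ a subsemigroup, but that plays no role in the compactness-type assertion).

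The only point that might look like an obstacle is verifying that the cited result really does upgrade total countable pracompactness of the factors to feeble compactness of the $\Sigma$-product, rather than some weaker conclusion such as feeble compactness of the Tychonoff product (handled by Corollary~\ref{corollary-9}); but once Proposition~2.2 is taken as given, the corollary follows in one line by combining it with Theorem~\ref{theorem-8}.
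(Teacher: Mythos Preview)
Your proposal is correct and follows exactly the route the paper itself indicates: the paper simply states that Theorem~\ref{theorem-8} together with Proposition~2.2 of \cite{Gutik-Ravsky-20??} imply the corollary, and you have spelled out precisely this combination, upgrading each factor from feebly compact to totally countably pracompact via Theorem~\ref{theorem-8} and then invoking the cited $\Sigma$-product result. Your observation that the idempotency of $p$ plays no role in the topological conclusion (only in making $\Sigma(p,X)$ a subsemigroup) is also accurate.
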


\section{On compact shift continuous topologies on the semigroup $\mathscr{I}_\lambda^n$}\label{section-2}

The following example implies that there exists a countable feebly compact Hausdorff semitopological semigroup $\left(\mathscr{I}_\omega^{2},\right)$ which is not $\omega$-bounded-pracompact.

\begin{example}\label{example-14}
The following family
\begin{equation*}
\begin{split}
  \mathscr{B}_{\operatorname{\textsf{c}}}& =\left\{U_\alpha(\alpha_1,\ldots,\alpha_k)=
    {\uparrow}_{\preccurlyeq}\alpha\setminus({\uparrow}_{\preccurlyeq}\alpha_1\cup\cdots\cup
    {\uparrow}_{\preccurlyeq}\alpha_k) \colon\right. \\
    & \qquad \left. \alpha_i\in{\uparrow}_{\preccurlyeq}\alpha\setminus
    \{\alpha\}, \alpha, \alpha_i\in\mathscr{I}_\omega^2, i=1,\ldots, k\right\}
\end{split}
\end{equation*}
determines a base of the topology $\tau_{\operatorname{\textsf{c}}}$ on $\mathscr{I}_\omega^2$. By Proposition~10 from \cite{Gutik-Reiter-2010}, $\left(\mathscr{I}_\omega^2,\tau_{\operatorname{\textsf{c}}}\right)$ is a Hausdorff compact semitopological semigroup with continuous inversion.

We construct a  stronger topology $\tau_{\operatorname{\textsf{fc}}}^2$ on $\mathscr{I}_\lambda^2$ in the following way. For every nonzero element $x\in\mathscr{I}_\lambda^2$ we assume that the base $\mathscr{B}_{\operatorname{\textsf{fc}}}^2(x)$ of the topology $\tau_{\operatorname{\textsf{fc}}}^2$ at the point $x$ coincides with the base of the topology $\tau_{\operatorname{\textsf{c}}}^2$ at $x$, and
\begin{equation*}
  \mathscr{B}_{\operatorname{\textsf{fc}}}^2(0)  =\left\{U_B(\operatorname{\textbf{0}})=U(\operatorname{\textbf{0}})\setminus\left(\mathscr{I}_\lambda^2\setminus\{\boldsymbol{0}\}\right\}\colon U(0)\in\mathscr{B}_{\operatorname{\textsf{c}}}^2(0)  \right\}
\end{equation*}
form a base of the topology $\tau_{\operatorname{\textsf{fc}}}^2$ at zero $\operatorname{\textbf{0}}$ of the semigroup $\mathscr{I}_\lambda^2$. Since $\left(\mathscr{I}_\omega^2,\tau_{\operatorname{\textsf{fc}}}^2\right)$ is a variant of the semitopological semigroup defined in Example 3 of \cite{Gutik-2017}, $\tau_{\operatorname{\textsf{fc}}}^2$ is a Hausdorff topology on $\mathscr{I}_\lambda^2$. Moreover, by Proposition~1 of \cite{Gutik-2017}, $\left(\mathscr{I}_\omega^2,\tau_{\operatorname{\textsf{fc}}}^2\right)$ is a countably pracompact semitopological semigroup with continuous inversion.
\end{example}

\begin{proposition}\label{proposition-15}
The space $\left(\mathscr{I}_\omega^2,\tau_{\operatorname{\textsf{fc}}}^2\right)$ is not $\omega$-bounded-pracompact.
\end{proposition}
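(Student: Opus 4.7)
The plan is to reduce the claim to showing that $(\mathscr{I}_\omega^2,\tau_{\operatorname{\textsf{fc}}}^2)$ fails to be compact, and then to verify non-compactness by exhibiting an infinite closed discrete subspace.

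For the reduction, I would use that $\lambda=\omega$ forces the underlying set $\mathscr{I}_\omega^2$ to be countable, so every dense subset $D$ of $(\mathscr{I}_\omega^2,\tau_{\operatorname{\textsf{fc}}}^2)$ is countable. If $(\mathscr{I}_\omega^2,\tau_{\operatorname{\textsf{fc}}}^2)$ were $\omega$-bounded-pracompact witnessed by $D$, then $D$ itself is a countable subset of $D$ whose closure must be compact; but density gives $\operatorname{cl}_{\tau_{\operatorname{\textsf{fc}}}^2}(D)=\mathscr{I}_\omega^2$. Hence the problem collapses to proving that the whole space fails to be compact.

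For non-compactness, I would take $Y:=\mathscr{I}_\omega^1\setminus\{\boldsymbol{0}\}$, the countably infinite set of nonzero rank-one partial bijections, and show that $Y$ is both closed and discrete in $(\mathscr{I}_\omega^2,\tau_{\operatorname{\textsf{fc}}}^2)$; since an infinite discrete space is not compact and a closed subspace of a compact space is compact, this suffices. Discreteness is immediate from Lemma~2 of \cite{Gutik-2017}: for every $\alpha\in Y$ the set ${\uparrow}_{\preccurlyeq}\alpha$ is open in $\tau_{\operatorname{\textsf{c}}}^2$, and the bases $\mathscr{B}_{\operatorname{\textsf{c}}}^2$ and $\mathscr{B}_{\operatorname{\textsf{fc}}}^2$ agree at every point other than $\boldsymbol{0}$, so ${\uparrow}_{\preccurlyeq}\alpha$ is open in $\tau_{\operatorname{\textsf{fc}}}^2$ as well, while ${\uparrow}_{\preccurlyeq}\alpha\cap Y=\{\alpha\}$ since a rank-one element above $\alpha$ must equal $\alpha$. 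For closedness I would verify openness of the complement $(\mathscr{I}_\omega^2\setminus\mathscr{I}_\omega^1)\cup\{\boldsymbol{0}\}$: every rank-two element is isolated in $\tau_{\operatorname{\textsf{fc}}}^2$, and the distinguishing feature of the base $\mathscr{B}_{\operatorname{\textsf{fc}}}^2(\boldsymbol{0})$ is that each of its members is obtained from a member of $\mathscr{B}_{\operatorname{\textsf{c}}}^2(\boldsymbol{0})$ by deleting the nonzero rank-one elements, so any such basic neighbourhood of $\boldsymbol{0}$ is contained in $(\mathscr{I}_\omega^2\setminus\mathscr{I}_\omega^1)\cup\{\boldsymbol{0}\}$.

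The only non-routine step is extracting from the definition of $\tau_{\operatorname{\textsf{fc}}}^2$ the precise property that every basic neighbourhood of $\boldsymbol{0}$ avoids all nonzero rank-one elements; once this is in hand, the closedness of $Y$ is automatic, and the rest reduces to the purely set-theoretic observation that a countable space which is not compact cannot be $\omega$-bounded-pracompact.
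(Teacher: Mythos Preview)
Your proposal is correct and follows essentially the same approach as the paper: both reduce to non-compactness via the observation that $\mathscr{I}_\omega^2$ is countable, so any dense witness $D$ is itself a countable subset of $D$ whose closure is the whole space. The only difference is that the paper simply asserts $(\mathscr{I}_\omega^2,\tau_{\operatorname{\textsf{fc}}}^2)$ is not compact (relying on the construction in Example~\ref{example-14} and its reference to \cite{Gutik-2017}), whereas you spell this out by exhibiting $\mathscr{I}_\omega^1\setminus\{\boldsymbol{0}\}$ as an infinite closed discrete subspace---which is exactly the feature encoded in the definition of $\mathscr{B}_{\operatorname{\textsf{fc}}}^2(\boldsymbol{0})$.
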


\begin{proof}
Since the space $\left(\mathscr{I}_\omega^2,\tau_{\operatorname{\textsf{fc}}}^2\right)$ is feebly compact and Hausdorff, by Proposition~2 of \cite{Gutik-2017} the subset $\mathscr{I}_\lambda^2\setminus\mathscr{I}_\lambda^{1}$ is dense in $\left(\mathscr{I}_\omega^2,\tau_{\operatorname{\textsf{fc}}}^2\right)$, and by Lemma~2 from \cite{Gutik-2017}  every point of the set $\mathscr{I}_\lambda^2\setminus\mathscr{I}_\lambda^{1}$ is isolated in $\left(\mathscr{I}_\omega^2,\tau_{\operatorname{\textsf{fc}}}^2\right)$. This implies that every dense subset $D$ of $\left(\mathscr{I}_\omega^2,\tau_{\operatorname{\textsf{fc}}}^2\right)$ contains the set $\mathscr{I}_\lambda^n\setminus\mathscr{I}_\lambda^{n-1}$. Then
\begin{equation*}
\operatorname{cl}_{\left(\mathscr{I}_\omega^2,\tau_{\operatorname{\textsf{fc}}}^2\right)}(D)= \operatorname{cl}_{\left(\mathscr{I}_\omega^2,\tau_{\operatorname{\textsf{fc}}}^2\right)}(\mathscr{I}_\lambda^2\setminus\mathscr{I}_\lambda^{1})= \mathscr{I}_\omega^2
\end{equation*}
for every dense subset $D$ of $\left(\mathscr{I}_\omega^2,\tau_{\operatorname{\textsf{fc}}}^2\right)$. Since $\mathscr{I}_\omega^2$ is countable, so is $D$, and hence the space $\left(\mathscr{I}_\omega^2,\tau_{\operatorname{\textsf{fc}}}^2\right)$ is not $\omega$-bounded-pracompact, because $\left(\mathscr{I}_\omega^2,\tau_{\operatorname{\textsf{fc}}}^2\right)$ is not compact.
\end{proof}

\begin{proposition}\label{proposition-16}
Let $n$ be any positive integer and $\lambda$ be any infinite cardinal. If $\mathscr{I}_\lambda^{n}$ is a $T_1$-semitopological semigroup then the following statements hold:
\begin{enumerate}
  \item\label{proposition-16-1} $\mathscr{I}_A^{n}$ is a closed subsemigroup  of $\mathscr{I}_\lambda^{n}$ for any subset $A\subseteq \lambda$;
  \item\label{proposition-16-2} the band $E(\mathscr{I}_\lambda^{n})$ is a closed subset  of $\mathscr{I}_\lambda^{n}$.
\end{enumerate}
\end{proposition}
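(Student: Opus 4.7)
The plan is to exploit separate continuity of multiplication together with the $T_1$ axiom: for each $x\in\lambda$ the rank-one idempotent $e_x=\{(x,x)\}$ belongs to $\mathscr{I}_\lambda^1\subseteq\mathscr{I}_\lambda^n$, and under the right-action convention $x\beta$ of the paper one computes that $e_x\beta=\{(x,x\beta)\}$ when $x\in\operatorname{dom}\beta$ and $e_x\beta=\boldsymbol{0}$ otherwise, while $\beta e_x=\{(x\beta^{-1},x)\}$ when $x\in\operatorname{ran}\beta$ and $\beta e_x=\boldsymbol{0}$ otherwise. Consequently the one-sided shifts $L_{e_x}\colon\beta\mapsto e_x\beta$ and $R_{e_x}\colon\beta\mapsto\beta e_x$ are continuous with very restricted images; since singletons and two-point sets are closed under $T_1$, pulling back cofinite complements yields open neighbourhoods that separate any chosen element from the set we want to show closed.

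For part (\ref{proposition-16-1}) I first check the subsemigroup property: for $\alpha,\beta\in\mathscr{I}_A^n$, $\operatorname{dom}(\alpha\beta)\subseteq\operatorname{dom}\alpha\subseteq A$ and $\operatorname{ran}(\alpha\beta)\subseteq\operatorname{ran}\beta\subseteq A$, so $\alpha\beta\in\mathscr{I}_A^n$. For closedness, fix $\alpha\in\mathscr{I}_\lambda^n\setminus\mathscr{I}_A^n$; then either $\operatorname{dom}\alpha\setminus A$ or $\operatorname{ran}\alpha\setminus A$ is non-empty. If $x\in\operatorname{dom}\alpha\setminus A$, then $L_{e_x}(\alpha)=\{(x,x\alpha)\}\neq\boldsymbol{0}$, whereas for every $\beta\in\mathscr{I}_A^n$ one has $x\notin A\supseteq\operatorname{dom}\beta$ and hence $L_{e_x}(\beta)=\boldsymbol{0}$; therefore $L_{e_x}^{-1}(\mathscr{I}_\lambda^n\setminus\{\boldsymbol{0}\})$ is an open neighbourhood of $\alpha$ disjoint from $\mathscr{I}_A^n$. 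The symmetric case $y\in\operatorname{ran}\alpha\setminus A$ is handled identically via $R_{e_y}$.

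For part (\ref{proposition-16-2}), fix $\alpha\in\mathscr{I}_\lambda^n\setminus E(\mathscr{I}_\lambda^n)$ and choose $x\in\operatorname{dom}\alpha$ with $x\alpha\neq x$; then $L_{e_x}(\alpha)=\{(x,x\alpha)\}$ differs from both $e_x=\{(x,x)\}$ and $\boldsymbol{0}$. On the other hand, every $\varepsilon\in E(\mathscr{I}_\lambda^n)$ satisfies $x\varepsilon=x$ whenever $x\in\operatorname{dom}\varepsilon$, so $L_{e_x}(\varepsilon)\in\{e_x,\boldsymbol{0}\}$, and hence $L_{e_x}^{-1}(\mathscr{I}_\lambda^n\setminus\{e_x,\boldsymbol{0}\})$ is an open neighbourhood of $\alpha$ disjoint from $E(\mathscr{I}_\lambda^n)$. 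I do not anticipate any substantial obstacle: the argument uses only separate continuity and the $T_1$ property, invoking neither feeble compactness, the Sunflower Lemma, nor Lemmas~\ref{lemma-2} and~\ref{lemma-2.10}. The one bookkeeping point is the need for both left and right translations in part (\ref{proposition-16-1}), which is automatic under the semitopological hypothesis.
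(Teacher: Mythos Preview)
Your proof is correct. The computations of $e_x\beta$ and $\beta e_y$ are accurate, the translations $L_{e_x}$, $R_{e_y}$ are continuous by separate continuity, and the sets $\mathscr{I}_\lambda^n\setminus\{\boldsymbol{0}\}$ and $\mathscr{I}_\lambda^n\setminus\{e_x,\boldsymbol{0}\}$ are open by $T_1$; so the preimages you write down are indeed open neighbourhoods separating the chosen point from the target set. The verification of the subsemigroup property in part~(\ref{proposition-16-1}) is also fine.

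Your route differs from the paper's. For each $\gamma$ in the complement the paper takes the whole up-set ${\uparrow}_{\preccurlyeq}\gamma=\{\beta:\gamma\preccurlyeq\beta\}$ as the separating open neighbourhood, invoking Lemma~2 of \cite{Gutik-2017} for its openness, and then observes that no extension of $\gamma$ can land in $\mathscr{I}_A^n$ (respectively in $E(\mathscr{I}_\lambda^n)$) once $\gamma$ itself does not. You instead translate by a single rank-one idempotent $e_x$ at one witness point and pull back the complement of one or two closed points. The paper's approach gives a single neighbourhood tied to the natural partial order used throughout the article, at the cost of an external citation; your approach is self-contained, using nothing beyond $T_1$ and separate continuity, and would port unchanged to any $T_1$ semitopological inverse subsemigroup of $\mathscr{I}_\lambda$ containing the relevant $e_x$, even where openness of ${\uparrow}_{\preccurlyeq}\gamma$ has not been established.
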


\begin{proof}
\eqref{proposition-16-1} Fix an arbitrary $\gamma\in \mathscr{I}_\lambda^{n}\setminus \mathscr{I}_A^{n}$. Then $\operatorname{dom}\gamma\nsubseteq A$ or $\operatorname{ran}\gamma\nsubseteq A$. Since $\eta\preccurlyeq\delta$ if and only if $\textsf{graph}(\eta)\subseteq \textsf{graph}(\delta)$ for $\eta,\delta\in \mathscr{I}_\lambda^{n}$, the above arguments imply that ${\uparrow}_{\preccurlyeq}\gamma\cap \mathscr{I}_A^{n}=\varnothing$. By Lemma~2 of \cite{Gutik-2017} the set ${\uparrow}_{\preccurlyeq}\gamma$ is open in $\mathscr{I}_\lambda^{n}$, which implies statement \eqref{proposition-16-1}.

\eqref{proposition-16-2} Fix an arbitrary $\gamma\in \mathscr{I}_\lambda^{n}\setminus E(\mathscr{I}_\lambda^{n})$. Since $\mathscr{I}_\lambda^{n}$ is an inverse subsemigroup of the symmetric inverse monoid  $\mathscr{I}_\lambda$, all idempotents of $\mathscr{I}_\lambda^{n}$ is are partial identity maps of rank $\leqslant n$. Then similar arguments as in statement \eqref{proposition-16-1} imply that $E(\mathscr{I}_\lambda^{n})$ is a closed subset  of $\mathscr{I}_\lambda^{n}$.
\end{proof}

Proposition~\ref{proposition-16} implies the following corollary.

\begin{corollary}\label{corollary-17}
Let $n$ be any positive integer, $\lambda$ be any infinite cardinal and $A$ be an arbitrary infinite subset of $\lambda$. If $\mathscr{I}_\lambda^{n}$ is a compact $T_1$-semitopological semigroup then $\mathscr{I}_A^{n}$ with the induced topology from $\mathscr{I}_\lambda^{n}$ is a compact semitopological semigroup.
\end{corollary}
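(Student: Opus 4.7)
The plan is to derive Corollary~\ref{corollary-17} almost immediately from Proposition~\ref{proposition-16}\eqref{proposition-16-1} combined with two elementary facts from general topology and the theory of semitopological semigroups.

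First I would invoke Proposition~\ref{proposition-16}\eqref{proposition-16-1} to conclude that $\mathscr{I}_A^{n}$ is a closed subsemigroup of $\mathscr{I}_\lambda^{n}$. Since the ambient space $\mathscr{I}_\lambda^{n}$ is assumed to be compact, every closed subspace is compact, so $\mathscr{I}_A^{n}$ endowed with the subspace topology inherited from $\mathscr{I}_\lambda^{n}$ is a compact topological space. The $T_1$ separation axiom also passes to subspaces, which we record for completeness.

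Next I would verify that the algebraic structure on $\mathscr{I}_A^{n}$ is compatible with the induced topology. Because $\mathscr{I}_A^{n}$ is a subsemigroup, the semigroup operation of $\mathscr{I}_\lambda^{n}$ restricts to a binary operation on $\mathscr{I}_A^{n}$. For each fixed $\alpha \in \mathscr{I}_A^{n}$ the left and right translations $\beta\mapsto \alpha\beta$ and $\beta\mapsto \beta\alpha$ on $\mathscr{I}_A^{n}$ are the restrictions of the corresponding continuous translations on $\mathscr{I}_\lambda^{n}$, and restrictions of continuous maps to subspaces are continuous. Hence $\mathscr{I}_A^{n}$ with the induced topology is a semitopological semigroup.

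Combining these two observations yields that $\mathscr{I}_A^{n}$ is a compact $T_1$-semitopological semigroup. I do not anticipate any genuine obstacle here; the only substantive input is closedness of $\mathscr{I}_A^{n}$, which has already been established in Proposition~\ref{proposition-16}\eqref{proposition-16-1}, so the corollary reduces to invoking the classical facts that closed subspaces of compact spaces are compact and that separate continuity descends to subsemigroups.
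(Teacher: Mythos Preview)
Your proof is correct and follows exactly the approach the paper takes: the corollary is stated as an immediate consequence of Proposition~\ref{proposition-16}\eqref{proposition-16-1}, using that a closed subspace of a compact space is compact and that separate continuity restricts to subsemigroups. There is nothing to add.
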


\begin{lemma}\label{lemma-18}
Let $n$ be any positive integer, $\lambda$ be any infinite cardinal and $A$ be an arbitrary infinite countable subset of $\lambda$. If $\mathscr{I}_\lambda^{n}$ is a $\omega$-bounded-pracompact $T_1$-semi\-topolo\-gical semigroup then $\mathscr{I}_A^{n}\setminus\mathscr{I}_A^{n-1}$ is a dense subset of $\mathscr{I}_A^{n}$, and hence $\mathscr{I}_A^{n}$ is compact.
\end{lemma}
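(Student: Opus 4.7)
The plan is to verify that $E = \mathscr{I}_A^n \setminus \mathscr{I}_A^{n-1}$ is dense in $\mathscr{I}_A^n$ by a direct combinatorial argument based on Lemma~\ref{lemma-2.10}, and then to invoke the $\omega$-bounded-pracompact hypothesis on the countable set $E$ to conclude compactness.

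First I would note that every $\omega$-bounded-pracompact space is feebly compact (a countable infinite subset of the dense witness has compact, hence infinite compact closure, so an accumulation point), so Lemma~\ref{lemma-2.10} along with Lemma~2 and Proposition~2 of \cite{Gutik-2017} all apply. Fix a dense subset $D$ of $\mathscr{I}_\lambda^n$ witnessing $\omega$-bounded-pracompactness; since every element of $\mathscr{I}_\lambda^n\setminus\mathscr{I}_\lambda^{n-1}$ is isolated, $D$ necessarily contains $\mathscr{I}_\lambda^n\setminus\mathscr{I}_\lambda^{n-1}$, and in particular $E\subseteq D$. Moreover, $\mathscr{I}_A^n$ is closed in $\mathscr{I}_\lambda^n$ by Proposition~\ref{proposition-16}\eqref{proposition-16-1}, and $E$ is countable because $A$ is.

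To show $E$ is dense in $\mathscr{I}_A^n$, I would pick an arbitrary $\alpha\in\mathscr{I}_A^n$ of rank $m<n$ and assume, towards a contradiction, that some open $U\ni\alpha$ in $\mathscr{I}_\lambda^n$ satisfies $U\cap E=\varnothing$. Lemma~\ref{lemma-2.10} produces finitely many $\alpha_1,\ldots,\alpha_k\in{\uparrow}_{\preccurlyeq}\alpha\setminus\{\alpha\}$ with
\begin{equation*}
(\mathscr{I}_\lambda^n\setminus\mathscr{I}_\lambda^{n-1})\cap{\uparrow}_{\preccurlyeq}\alpha\subseteq U\cup\bigcup_{i=1}^k{\uparrow}_{\preccurlyeq}\alpha_i.
\end{equation*}
Those $\alpha_i$ whose domain or range exits $A$ cannot have $\mathscr{I}_A^n$-extensions, so I may assume each $\alpha_i\in\mathscr{I}_A^n$. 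The finite set $X=\bigcup_i(\operatorname{dom}\alpha_i\setminus\operatorname{dom}\alpha)$ omits all but finitely many elements of the countably infinite set $A\setminus\operatorname{dom}\alpha$, and similarly for $Y=\bigcup_i(\operatorname{ran}\alpha_i\setminus\operatorname{ran}\alpha)$ on the range side. Choosing $n-m$ distinct pairs $(x_j,y_j)$ with $x_j\in(A\setminus\operatorname{dom}\alpha)\setminus X$ and $y_j\in(A\setminus\operatorname{ran}\alpha)\setminus Y$ yields an extension $\gamma=\alpha\cup\{(x_j,y_j)\colon m<j\leqslant n\}\in E\cap{\uparrow}_{\preccurlyeq}\alpha$ whose new domain elements are disjoint from each $\operatorname{dom}\alpha_i\setminus\operatorname{dom}\alpha$. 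Therefore $\gamma$ does not extend any $\alpha_i$, so $\gamma\notin\bigcup_i{\uparrow}_{\preccurlyeq}\alpha_i$, contradicting the displayed inclusion. Hence $\alpha\in\operatorname{cl}_{\mathscr{I}_\lambda^n}(E)$.

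Finally, $E\subseteq D$ is countable, so $\operatorname{cl}_{\mathscr{I}_\lambda^n}(E)$ is compact by the $\omega$-bounded-pracompact hypothesis; the closedness of $\mathscr{I}_A^n$ together with the density of $E$ in $\mathscr{I}_A^n$ give $\operatorname{cl}_{\mathscr{I}_\lambda^n}(E)=\mathscr{I}_A^n$, so $\mathscr{I}_A^n$ is compact. The main obstacle is the combinatorial step constructing $\gamma$: one must simultaneously keep $\gamma$ inside $\mathscr{I}_A^n$, force its rank to be exactly $n$, and make it miss finitely many prescribed upper sets ${\uparrow}_{\preccurlyeq}\alpha_i$; the crucial point is that only finitely many ``new coordinates'' of the $\alpha_i$'s need to be avoided, while $A\setminus\operatorname{dom}\alpha$ and $A\setminus\operatorname{ran}\alpha$ remain infinite, so such a $\gamma$ is always available.
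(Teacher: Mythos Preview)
Your proof is correct, and it takes a genuinely different route from the paper's.

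The paper proceeds by induction on the rank of $\alpha\in\mathscr{I}_A^n$, showing that ${\uparrow}_{\preccurlyeq}^A\alpha\cap(\mathscr{I}_A^n\setminus\mathscr{I}_A^{n-1})$ is dense in ${\uparrow}_{\preccurlyeq}^A\alpha$. The base case ($\operatorname{rank}\alpha=n-1$) comes from Lemmas~\ref{lemma-2} and~\ref{lemma-3}, which force ${\uparrow}_{\preccurlyeq}^A\alpha$ to be compact with a single non-isolated point. In the inductive step, if density failed at some $\beta$, the inductive hypothesis pins the failure to $\beta$ alone, so that $\operatorname{cl}_{\mathscr{I}_A^n}\big({\uparrow}_{\preccurlyeq}^A\beta\cap(\mathscr{I}_A^n\setminus\mathscr{I}_A^{n-1})\big)={\uparrow}_{\preccurlyeq}^A\beta\setminus\{\beta\}$; this is the closure of a countable set of isolated points (hence a countable subset of the dense witness $D$), yet the cover $\{{\uparrow}_{\preccurlyeq}^A\delta:\delta\in{\uparrow}_{\preccurlyeq}^A\beta\setminus\{\beta\}\}$ has no finite subcover, contradicting $\omega$-bounded-pracompactness.

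You bypass the induction entirely: you feed the hypothetical neighbourhood $U$ directly into Lemma~\ref{lemma-2.10} and then \emph{construct} a rank-$n$ element $\gamma\in\mathscr{I}_A^n\cap{\uparrow}_{\preccurlyeq}\alpha$ that dodges all of the finitely many ${\uparrow}_{\preccurlyeq}\alpha_i$. The key observation---that each $\alpha_i$ contributes only finitely many ``new'' domain and range points, while $A\setminus\operatorname{dom}\alpha$ and $A\setminus\operatorname{ran}\alpha$ remain infinite---is exactly what makes this avoidance possible, and your verification that $\gamma\not\supseteq\alpha_i$ (via a new domain point $p\in X$ missing from $\operatorname{dom}\gamma$) is sound. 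A pleasant by-product of your argument is that the density of $\mathscr{I}_A^n\setminus\mathscr{I}_A^{n-1}$ in $\mathscr{I}_A^n$ follows from feeble compactness alone, with $\omega$-bounded-pracompactness entering only in the final compactness step; the paper's inductive argument, by contrast, invokes $\omega$-bounded-pracompactness already inside the density proof.
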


\begin{proof}
For any $\alpha\in \mathscr{I}_A^{n}$ we denote ${\uparrow}_{\preccurlyeq}^A\alpha={\uparrow}_{\preccurlyeq}\alpha\cap \mathscr{I}_A^{n}$.

By induction we shall show that the set ${\uparrow}_{\preccurlyeq}^A\alpha\cap (\mathscr{I}_A^{n}\setminus\mathscr{I}_A^{n-1})$ is dense in ${\uparrow}_{\preccurlyeq}^A\alpha$  for any $\alpha\in \mathscr{I}_A^{n}$. In the case when $\operatorname{rank}\alpha=n-1$ by Lemmas~\ref{lemma-2} and \ref{lemma-3} we have that the set ${\uparrow}_{\preccurlyeq}\alpha$ is compact, and hence by Proposition~\ref{proposition-16}\eqref{proposition-16-1}, ${\uparrow}_{\preccurlyeq}^A\alpha$ is compact as well. Since all points of $\mathscr{I}_A^{n}\setminus\mathscr{I}_A^{n-1}$ are isolated in $\mathscr{I}_\lambda^{n}$, the set ${\uparrow}_{\preccurlyeq}^A\alpha\cap (\mathscr{I}_A^{n}\setminus\mathscr{I}_A^{n-1})$ is dense in ${\uparrow}_{\preccurlyeq}^A\alpha$.

Next we show that the statement \emph{${\uparrow}_{\preccurlyeq}^A\alpha\cap (\mathscr{I}_A^{n}\setminus\mathscr{I}_A^{n-1})$ is dense in ${\uparrow}_{\preccurlyeq}^A\alpha$  for any $\alpha\in \mathscr{I}_A^{n}$ with $\operatorname{rank}\alpha=n-k$, for all $k<m$} implies that the same is true for any $\beta\in \mathscr{I}_A^{n}$ with $\operatorname{rank}\beta=n-m$, where $m\leqslant n$. Fix an arbitrary $\beta\in \mathscr{I}_A^{n}$ with $\operatorname{rank}\beta=n-m$. Suppose to the contrary that the set ${\uparrow}_{\preccurlyeq}^A\beta\cap (\mathscr{I}_A^{n}\setminus\mathscr{I}_A^{n-1})$ is not dense in ${\uparrow}_{\preccurlyeq}^A\beta$. The assumption of induction implies that $\gamma\in \operatorname{cl}_{\mathscr{I}_A^{n}}({\uparrow}_{\preccurlyeq}^A\beta\cap (\mathscr{I}_A^{n}\setminus\mathscr{I}_A^{n-1}))$ for any $\gamma\in {\uparrow}_{\preccurlyeq}^A\beta\setminus\{\beta\}$, and hence $\beta\notin\operatorname{cl}_{\mathscr{I}_A^{n}}({\uparrow}_{\preccurlyeq}^A\beta\cap (\mathscr{I}_A^{n}\setminus\mathscr{I}_A^{n-1}))$. Then there exists an open neighbourhood $U(\beta)$ of $\beta$ in $\mathscr{I}_A^{n}$ such that $U(\beta)\cap ({\uparrow}_{\preccurlyeq}^A\beta\cap (\mathscr{I}_A^{n}\setminus\mathscr{I}_A^{n-1}))=\varnothing$. By Lemma~2 from \cite{Gutik-2017} for any $\delta\in\mathscr{I}_\lambda^n$ the set ${\uparrow}_{\preccurlyeq}\delta$ is open-and-closed in $\mathscr{I}_\lambda^n,\tau$, and hence ${\uparrow}_{\preccurlyeq}^A\delta$ is open-and-closed in $\mathscr{I}_A^n$ as well. Hence we get that
\begin{equation*}
\operatorname{cl}_{\mathscr{I}_A^{n}}({\uparrow}_{\preccurlyeq}^A\beta\cap (\mathscr{I}_A^{n}\setminus\mathscr{I}_A^{n-1}))={\uparrow}_{\preccurlyeq}^A\beta\setminus\{\beta\}
\end{equation*}
but the family $\mathscr{U}=\left\{{\uparrow}_{\preccurlyeq}^A\delta\colon \delta\in {\uparrow}_{\preccurlyeq}^A\beta\setminus\{\beta\} \right\}$ is an open cover of ${\uparrow}_{\preccurlyeq}^A\beta$ which hasn't a finite subcover. This contradicts the condition that $\mathscr{I}_\lambda^{n}$ is a $\omega$-bounded-pracompact space, which completes the proof of the first statement of the lemma. The last statement immediately follows from the firs statement and the definition of the $\omega$-bounded-pracompact space.
\end{proof}

Theorem~\ref{theorem-19} describes feebly $\omega$-bounded shift-continuous $T_1$-topologies on the semigroup $\mathscr{I}_\omega^n$.

\begin{theorem}\label{theorem-19}
Let $n$ be any positive integer and $\lambda$ be any infinite cardinal. Then for any $T_1$-semi\-topolo\-gical semigroup $\mathscr{I}_\lambda^{n}$ the following conditions are equivalent:
\begin{itemize}
  \item[$(i)$] $\mathscr{I}_\lambda^{n}$ compact;
  \item[$(ii)$] $\mathscr{I}_\lambda^{n}$ is $\omega$-bounded-pracompact;
  \item[$(iii)$] $\mathscr{I}_\lambda^{n}$ is feebly $\omega$-bounded.
\end{itemize}
\end{theorem}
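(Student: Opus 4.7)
I would prove the equivalence via the cycle $(i) \Rightarrow (ii) \Rightarrow (iii) \Rightarrow (i)$. The first implication is trivial with witness $D=X$. For $(ii) \Rightarrow (iii)$, given a dense $D$ whose countable subsets have compact closure and any sequence $\{U_n\}$ of nonempty open sets, picking $d_n \in D \cap U_n$ makes $\operatorname{cl}\{d_n\}$ a compact set meeting every $U_n$; this is a general topological fact.

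For the crucial direction $(iii) \Rightarrow (i)$ I would first observe that $\mathscr{I}_\lambda^n$ is automatically Hausdorff. By Lemma~2 of \cite{Gutik-2017} every ${\uparrow}_{\preccurlyeq}\alpha$ is open-and-closed, and for distinct $\alpha,\beta$ the antisymmetry of $\preccurlyeq$ gives (WLOG) $\beta\notin{\uparrow}_{\preccurlyeq}\alpha$, so ${\uparrow}_{\preccurlyeq}\alpha$ and its complement are disjoint open neighbourhoods of $\alpha$ and $\beta$. Hausdorffness then makes $(iii) \Rightarrow (ii)$ immediate: taking $D = \mathscr{I}_\lambda^n \setminus \mathscr{I}_\lambda^{n-1}$ (dense by Proposition~2 and consisting of isolated points by Lemma~2 of \cite{Gutik-2017}), for any countable $\{c_i\} \subseteq D$ the singletons $\{c_i\}$ are open, so hypothesis $(iii)$ yields a compact $K\supseteq\{c_i\}$, and since $K$ is closed (by Hausdorffness), $\operatorname{cl}\{c_i\}\subseteq K$ is compact.

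It remains to prove $(ii) \Rightarrow (i)$, which I would do by induction on $n$. The base case $n=1$ follows from Lemma~\ref{lemma-3}. For the inductive step, first verify that $\mathscr{I}_\lambda^{n-1}$ with the induced topology is $\omega$-bounded-pracompact: any countable $C \subseteq \mathscr{I}_\lambda^{n-1}$ lies inside $\mathscr{I}_A^{n-1}$ for a countable $A\subseteq\lambda$, and by Lemma~\ref{lemma-18} together with Proposition~\ref{proposition-16}\eqref{proposition-16-1} the set $\mathscr{I}_A^{n-1}$ is a closed subset of the compact $\mathscr{I}_A^n$, hence compact and containing $\overline{C}$. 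The inductive hypothesis then gives $\mathscr{I}_\lambda^{n-1}$ compact. Now given an open cover $\mathscr{V}$ of $\mathscr{I}_\lambda^n$, extract a finite subcover $V_1,\ldots,V_j\in\mathscr{V}$ of $\mathscr{I}_\lambda^{n-1}$; the uncovered points must lie in the isolated top layer $\mathscr{I}_\lambda^n\setminus\mathscr{I}_\lambda^{n-1}$. Apply Lemma~\ref{lemma-2.10} at $\boldsymbol{0}$ with the open neighbourhood $V_1\cup\cdots\cup V_j$ to dominate the uncovered top-rank points by finitely many cones ${\uparrow}_{\preccurlyeq}\alpha_i^{(1)}$, then iterate inside each such cone by reapplying Lemma~\ref{lemma-2.10} to some $V\in\mathscr{V}$ containing $\alpha_i^{(1)}$. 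Since rank strictly increases at every application, the recursion terminates at depth $\leq n$ and produces a finite subcover drawn from $\mathscr{V}$.

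The main obstacle is that Lemma~\ref{lemma-2.10} only controls the top-rank elements of ${\uparrow}_{\preccurlyeq}\alpha$, not the intermediate-rank elements that are generated during the recursion. The observation that resolves this is that every element of rank $<n$ already lies in $\mathscr{I}_\lambda^{n-1}$ and has therefore been absorbed by the initial finite subcover of that subsemigroup; the recursion only needs to chase the rank-$n$ top layer, which keeps the induced tree of cone-representatives finite.
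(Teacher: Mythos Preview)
Your proof is correct. The cycle structure and the $(iii)\Rightarrow(ii)$ step essentially match the paper's argument (you are in fact a bit more careful, making Hausdorffness explicit in order to justify that the compact witness $K$ is closed and hence contains $\operatorname{cl}\{c_i\}$; the paper glosses over this).

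The real divergence is in $(ii)\Rightarrow(i)$. The paper argues by contradiction: if $\mathscr{I}_\lambda^{n}$ were not compact, then by Theorem~1 of \cite{Gutik-2017} it would fail to be countably compact and hence would contain an infinite countable closed discrete subset $D$; but $D\subseteq\mathscr{I}_A^{n}$ for a suitable countable $A\subseteq\lambda$, and Lemma~\ref{lemma-18} forces $\mathscr{I}_A^{n}$ to be compact, a contradiction. You instead induct on $n$: Lemma~\ref{lemma-18} is used to show that the ideal $\mathscr{I}_\lambda^{n-1}$ is $\omega$-bounded in the induced topology, the inductive hypothesis then makes it compact, and a finite-depth recursion with Lemma~\ref{lemma-2.10} builds a finite subcover of the remaining rank-$n$ layer. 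Both routes rest on Lemma~\ref{lemma-18}; the paper's is shorter because it outsources the passage from countable compactness to compactness to the black box Theorem~1 of \cite{Gutik-2017}, while yours is more self-contained and explicitly constructive, at the price of the additional inductive and recursive machinery.
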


\begin{proof}
Implications $(i)\Rightarrow(iii)$ and $(ii)\Rightarrow(iii)$ are trivial.

\smallskip

$(iii)\Rightarrow(ii)$ Let $\mathscr{I}_\lambda^{n}$ be a feebly $\omega$-bounded $T_1$-semitopological semigroup. By Proposition~2 of \cite{Gutik-2017} the set $\mathscr{I}_\lambda^n\setminus\mathscr{I}_\lambda^{n-1}$ is dense in $\mathscr{I}_\lambda^n$. Fix an arbitrary infinite countable subset $D=\left\{\alpha_i\colon i\in\mathbb{N}\right\}$ in $\mathscr{I}_\lambda^n\setminus\mathscr{I}_\lambda^{n-1}$. By Lemma~2 from \cite{Gutik-2017}  every point of $D$ is isolated in $\mathscr{I}_\omega^n$, and hence by feeble $\omega$-boundedness of $\mathscr{I}_\lambda^{n}$ we get that there exists a compact subset $K\subseteq \mathscr{I}_\lambda^{n}$ such that $D\subseteq K$. Since the closure of a subset in compact space is compact, so is the closure of $D$. Hence the space $\mathscr{I}_\lambda^{n}$ is $\omega$-bounded-pracompact.

\smallskip

$(ii)\Rightarrow(i)$  Suppose the contrary: there exists a noncompact $\omega$-bounded-pracompact $T_1$-semitopological semigroup $\mathscr{I}_\lambda^{n}$. By Theorem 1 of \cite{Gutik-2017} the space $\mathscr{I}_\lambda^{n}$ is not countably compact. Then by Theorem 3.10.3 of \cite{Engelking-1989} the space $\mathscr{I}_\lambda^{n}$ has an infinite countable closed discrete subspace $D$. We put
\begin{equation*}
  A=\left\{x\in \lambda\colon x\in \operatorname{dom}\alpha\cup\operatorname{ran}\alpha \hbox{~for some~} \alpha\in D\right\}.
\end{equation*}
Since the set $D$ is countable, $\displaystyle\bigcup_{\alpha\in D}(\operatorname{dom}\alpha\cup\operatorname{ran}\alpha)$ is countable, and hence $A$ is countable, too. Then $\mathscr{I}_A^{n}$ contains $D$. By Proposition~\ref{proposition-16}\eqref{proposition-16-1}, $\mathscr{I}_A^{n}$ is a closed subspace of $\mathscr{I}_\lambda^{n}$, which implies that $D$ is an infinite countable closed discrete subspace of $\mathscr{I}_A^{n}$. This contradicts Lemma~\ref{lemma-18}, and hence $\mathscr{I}_\lambda^{n}$ is compact. \end{proof}
%%%%%%%%%%%%%%%%%%%%%%%%%%%%%%%%%%%%%%%%%%%%%%%%%%%%%%%%%%%%%%

%\section*{Acknowledgements}

%The author acknowledges Oleksandr Ravskyi for his comments and suggestions.
%%%%%%%%%%%%%%%%%%%%%%%%%%%%%%%%%%%%%%%%%%%%%%%%%%%%%%%%%%%%

%%%%%%%%%%%%%%%%%%%%%%%%%%%%%%%%%%%%%%%%%%%%%%%%%%%%%
\medskip
%\paragraph*{Acknowledgements}
%The author acknowledges Taras Banakh and the referee for useful important comments and suggestions.
%%%%%%%%%%%%%%%%%%%%%%%%%%%%%%%%%%%%%%%%%%%%%%%%%%%%%%%%%%%%

\end{document}